\title{Kernel ridge vs.~principal component regression: \\
minimax bounds and adaptability of regularization operators}
\author{Lee H. Dicker\thanks{Rutgers University, \url{ldicker@stat.rutgers.edu}}
\and Dean P. Foster\thanks{Amazon NYC, \url{dean@foster.net}}
\and Daniel Hsu\thanks{Columbia University, \url{djhsu@cs.columbia.edu}}}
\newcommand\hide[1]{}
\def\ddefloop#1{\ifx\ddefloop#1\else\ddef{#1}\expandafter\ddefloop\fi}
\def\ddef#1{\expandafter\def\csname bb#1\endcsname{\ensuremath{\mathbb{#1}}}}
\def\ddef#1{\expandafter\def\csname c#1\endcsname{\ensuremath{\mathcal{#1}}}}
\def\ddef#1{\expandafter\def\csname bf#1\endcsname{\ensuremath{\mathbf{#1}}}}
\def\ddef#1{\expandafter\def\csname bf#1\endcsname{\ensuremath{\boldsymbol{\csname #1\endcsname}}}}
\DeclareMathOperator{\diag}{diag}
\newcommand\parens[1]{(#1)}
\renewcommand\norm[1]{\|#1\|} 
\newcommand\braces[1]{\{#1\}}
\newcommand\brackets[1]{[#1]}
\renewcommand\abs[1]{|#1|} 
\newcommand\ind[1]{\ensuremath{\mathds{1}\{#1\}}}
\newcommand\dotp[1]{\langle #1 \rangle}
\newcommand\Parens[1]{\mleft(#1\mright)}
\newcommand\Norm[1]{\mleft\|#1\mright\|}
\newcommand\Braces[1]{\mleft\{#1\mright\}}
\newcommand\Brackets[1]{\mleft[#1\mright]}
\newcommand\Abs[1]{\mleft|#1\mright|}
\newcommand\Dotp[1]{\mleft\langle#1\mright\rangle}
\newcommand\bias{\ensuremath{\operatorname{B}_{\rho}}}
\newcommand\variance{\ensuremath{\operatorname{V}_{\rho}}}
\newcommand\dl{\ensuremath{d_{\lambda}}}
\newcommand\marginal{{\ensuremath{\rho_{\mathcal{X}}}}}
\newcommand\risk{{\ensuremath{\mathcal{R}_{\rho}}}}
\newcommand\reg{{\ensuremath{f^{\dagger}}}}
\newcommand\est{{\ensuremath{\hat f}}}
\newcommand\estl{{\ensuremath{\hat f_{\lambda}}}}
\newcommand\KRR{{\operatorname{KRR}}}
\newcommand\KPCR{{\operatorname{KPCR}}}
\newcommand\krr{{\ensuremath{\hat f_{\operatorname{KRR},\lambda}}}}
\newcommand\kpcr{{\ensuremath{\hat f_{\operatorname{KPCR},\lambda}}}}
\newcommand\linv{{\ensuremath{g_{\lambda}}}}
\newcommand\sinv{{\ensuremath{s_{\lambda}}}}
\let\oldtop\top
\renewcommand\top{{\ensuremath{\scriptscriptstyle{\oldtop}}}}
\def\<{\langle}
\def\>{\rangle}
\def\A{\mathbf{A}}
\def\a{\alpha}
\def\B{\mathbf{B}}
\def\b{\beta}
\def\bD{\mathbf{D}}
\def\ba{\boldsymbol{\alpha}}
\def\bb{\boldsymbol{\beta}}
\def\bphi{\boldsymbol{\phi}}
\def\bPhi{\boldsymbol{\Phi}}
\def\bPsi{\boldsymbol{\Psi}}
\def\d{\delta}
\def\E{\mathbb{E}}
\def\e{\epsilon}
\def\ee{\boldsymbol{\epsilon}}
\def\g{\gamma}
\def\H{\mathcal{H}}
\def\I{\mathbf{I}}
\def\k{\kappa}
\def\K{\mathbf{K}}
\def\l{\lambda}
\def\N{\mathbb{N}}
\def\R{\mathbb{R}}
\def\s{\sigma}
\def\S{\boldsymbol{\varSigma}}
\def\th{\theta}
\def\tr{\mathrm{tr}}
\def\T{\mathbf{T}}
\def\Th{\Theta}
\def\U{\mathbf{U}}
\def\Var{\mathrm{Var}}
\def\x{\mathbf{x}}
\def\X{\mathbf{X}}
\def\y{\mathbf{y}}
\def\Y{\mathbf{Y}}
\def\z{\mathbf{z}}
\newtheorem{theorem}{Theorem}
\newtheorem{lemma}{Lemma}
\newtheorem{proposition}{Proposition}
\newtheorem{remark}{Remark}
\newtheorem{corollary}{Corollary}
\begin{document}

\maketitle

\begin{abstract}
Regularization is an essential element of virtually all kernel methods for
nonparametric regression problems.  A critical factor in the
effectiveness of a given kernel method
is the type of regularization that is employed.
This article compares and contrasts members from a general class of regularization
techniques, which notably includes ridge regression and principal
component regression.  We derive an explicit finite-sample risk bound
for regularization-based estimators
that simultaneously accounts for (i) the structure of the ambient function space,
(ii) the regularity of the true regression function, and (iii) the
adaptability (or {\em qualification}) of the
regularization.  A simple consequence of this upper bound is that the
risk of the regularization-based estimators matches the minimax rate
in a variety of settings.  The general bound also illustrates how some
regularization techniques are more adaptable
than others to favorable regularity properties that the true
regression function may possess.
This, in particular, demonstrates a striking difference between kernel
ridge regression and kernel principal component regression.  Our
theoretical results are supported by numerical experiments.  
\end{abstract}

\if0
\begin{keywords}
Learning theory, principal component regression, reproducing kernel Hilbert space, ridge regression.
\end{keywords}
\fi

\section{Introduction}\label{sec:intro}

Suppose that the observed data consists of $\z_i = (y_i,\x_i)$, $i = 1,\dotsc,n$, where $y_i \in \cY
\subseteq \R$ and $\x_i \in \cX \subseteq \R^d$.
Suppose further that $\z_1,\dotsc,\z_n \sim \rho$ are iid from some probability distribution $\rho$
on $\cY \times \cX$.
Let $\rho(\cdot\mid\x)$ denote the conditional distribution of $y_i$ given $\x_i = \x \in \cX$ and
let $\marginal$ denote the marginal distribution of $\x_i$.
Our goal is to use the available data to estimate the regression function of $y$ on $\x$,
\[
  \reg(\x) \ = \ \int_{\cY} y \ \dif\rho(y\mid \x)
  \,,
\]
which minimizes the mean-squared prediction error
\[
  \int_{\cY\times \cX}
  \Parens{ y - f(\x) }^2 \ \dif\rho(y,\x)
\]
over $\marginal$-measurable functions $f \colon \cX \to \R$.
More specifically, for an estimator $\est$ define the risk 
\begin{align}\label{risk}
  \risk(\est)
  & \ = \ \E\Brackets{ \int \Parens{ \reg(\x) - \est(\x) }^2 \ \dif\marginal(\x) }
  \ = \ \E\Brackets{ \, \norm{\reg - \est}^2_{\marginal} } \,,
\end{align}
where the expectation is computed over $\z_1,\dotsc,\z_n$, and ${\norm{\cdot}_{\marginal}}$ denotes
the norm on $L^2(\marginal)$; we seek estimators $\est$ which minimize $\risk(\est)$.  

This is a version of the random design nonparametric regression problem.
There is a vast literature on nonparametric regression, along with a huge variety of corresponding
methods~\citep[e.g.,][]{gyorfi2002distribution,wasserman2006all}.
In this paper, we focus on regularization and kernel methods for estimating $\reg$.\footnote{Here, we mean
``kernel'' as in \emph{reproducing kernel Hilbert space}, rather than
\emph{kernel-smoothing}, which is another popular approach to nonparametric regression.}
Most of our results apply to general regularization operators.
However, our motivating examples are two well-known regularization techniques: Kernel ridge
regression (which we refer to as ``KRR''; KRR is also known as Tikhonov regularization) and kernel
principal component regression (``KPCR''; also known as spectral cut-off regularization).  

Our main theorem is a new upper bound on the risk of a general class of kernel-regularization
methods, which includes both KRR and KPCR (Theorem~\ref{thm:UB}).
The theorem substantially generalizes previously published bounds (see Section \ref{sec:related} for
a discussion of related work) and illustrates the dependence of the risk on three important
features: (i) the structure of the ambient reproducing kernel Hilbert
space (RKHS), (ii) the specific regularization technique
employed, and (iii) the regularity (often interpreted as smoothness) of the function to be
estimated.
One
consequence of the theorem is that the regularization methods studied in this paper
(including KRR and KPCR) achieve the minimax rate for estimating $\reg$ in a variety of settings.
A second consequence is that certain regularization methods (including
KPCR, but not KRR) may adapt to
favorable regularity of $\reg$ to attain even faster convergence rates, while others (notably KRR)
are limited in this regard due to a well-known \emph{saturation}
effect~\citep{neubauer1997converse,mathe2004saturation,bauer2007regularization}.
This illustrates a striking advantage that KPCR may have over KRR in these settings.  

\section{Related work}\label{sec:related}

Kernel ridge regression has been studied extensively in the literature.
Indeed, bounds for KRR that are similar to our Theorem \ref{thm:UB} have been derived by
\citet{caponnetto2007optimal}.
Moreover, it is well-known that KRR is minimax in many of the settings considered in this paper,
such as those described in Corollaries \ref{cor:poly}--\ref{cor:gauss}~\citep{caponnetto2007optimal, zhang2005learning, zhang2013divide}.
However, these cited results apply only to KRR, while the results in this paper apply to a
substantially larger class of regularization operators (including, for example, KPCR).  

Beyond KRR, there has also been significant research into more general regularization methods, like
those considered in this paper.
However, our bounds are sharper than previously published results on general regularization
operators.
For instance, unlike our Theorem \ref{thm:UB}, the bounds of \citet{bauer2007regularization} do not
illustrate the dependence of the risk on the ambient Hilbert space.
Thus, while our approach immediately implies that many of the regularization methods under
consideration are minimax optimal, it seems difficult (if not impossible) to draw this conclusion
using the approach of \citeauthor{bauer2007regularization}.
General regularization operators are studied by~\citet{caponnetto2010cross}, but their results
require a semi-supervised setting where an additional pool of unlabeled data is available.  

One of the major practical implications of this paper is that KPCR may have significant advantages
over KRR in some settings.
This has been observed previously by other researchers; others have even noted that Tikhonov
regularization (KRR) saturates, while spectral cut-off regularization (KPCR) does not
\citep{mathe2004saturation, bauer2007regularization, logerfo2008spectral}.
Our results (Theorem \ref{thm:UB} and Corollaries \ref{cor:poly}--\ref{cor:gauss}) sharpen these observations by precisely
quantifying the advantages of unsaturated regularization operators in terms of adaptability and
minimaxity.
In other related work, \citet{dhillon2013risk} have illustrated the potential advantages of KPCR
over KRR in finite-dimensional problems with linear kernels; though their work is not framed in
terms of saturation and general regularization operators, it relies on similar concepts.

We recently became aware of simultaneous independent work of \citet{blanchard2016optimal} that
proves upper bounds on the risk for a comparable class of regularization methods (which includes KRR
and KPCR), as well as minimax lower bounds that match the upper bounds; their results are
specialized to RKHS's where the covariance operator has polynomially decaying eigenvalues.
Compared to that work, our results require a weaker moment condition on the noise for risk bounds, apply to a much broader class of RKHS's, and also consider target functions that live in
finite-dimensional subspaces (see Proposition~\ref{prop:finite_adapt}).  Another
distinguishing feature of the present work is that it contains numerical
experiments that illustrate the implications of our theoretical
bounds in some practical settings (Section \ref{sec:experiments}).  

The main engine behind the technical results in this paper is a collection of large-deviation
results for Hilbert-Schmidt operators.
The required machinery is developed in the appendix.
These results build on straightforward extensions of results of \citet{tropp2015intro} and
\citet{minsker2011bernstein}.
Our most precise results for KPCR and achieving parametric rates for estimation over
finite-dimensional subspaces (Proposition~\ref{prop:finite_adapt}) rely on slightly different
arguments, which are based on well-known eigenvalue perturbation results that have been adapted to
handle Hilbert-Schmidt operators (e.g., the Davis-Kahan $\sin \Th$ theorem
\citep{davis1970rotation}).  

\section{Statistical setting and assumptions}

Our basic assumption on the distribution of $\z = (y,\x) \sim \rho$ is that the residual variance is
bounded; more specifically, we assume that there exists a constant $\s^2 > 0$ such that
\begin{equation}\label{resid}
  \int_{\cY} \left[ y - \reg(\x) \right]^2 \ d\rho(y\mid\x) \ \leq \ \s^2
\end{equation}
for almost all $\x \in \cX$.
\citet{zhang2013divide} also assume~\eqref{resid}; this assumption is slightly weaker than the
analogous assumption of \citet{bauer2007regularization} (Equation (1) in their paper).
Note that~\eqref{resid} holds if $y$ is bounded almost surely or if $y
= f^{\dagger}(\x) + \e$, where $\e$ is independent of $\x$, $\E(\e) = 0$, and $\Var(\e) = \s^2$.  

Let $K \colon \cX \times \cX \to \R$ be a symmetric positive-definite kernel function.
We assume that $K$ is bounded---i.e., that there exists $\k^2 > 0$
such that $\sup_{\x \in \cX} K(\x,\x) \ \leq \ \k^2$.
Additionally, we assume that there is a countable basis of eigenfunctions $\braces{\psi_j}_{j =
1}^{\infty} \subseteq L^2(\marginal)$ and a sequence of corresponding eigenvalues $t_1^2 \geq t_2^2
\geq \dotsb \geq 0$ such that
\begin{equation}\label{mercer}
  K(\x,\tilde\x) \ = \ \sum_{j = 1}^{\infty} t_j^2\psi_j(\x)\psi_j(\tilde\x) \,, \quad \x,\tilde\x \in \cX
\end{equation}
and the convergence is absolute.
Mercer's theorem and various generalizations give conditions under which representations
like~\eqref{mercer} are known to hold \citep{carmeli2006vector}; one of the simplest examples is
when $\cX$ is a compact Hausdorff space, $\marginal$ is a probability measure on the Borel sets of
$\cX$, and $K$ is continuous.
Observe that
\begin{align*}
  \sum_{j = 1}^{\infty} t_j^2
  & \ = \ \sum_{j = 1}^{\infty} t_j^2 \int_{\cX} \psi_j(\x)^2 \ \dif\marginal(\x)
  \ = \ \int_{\cX} K(\x,\x) \ \dif\marginal(\x)
  \ \leq \ \k^2 \,;
\end{align*}
in particular, $\braces{t_j^2} \in \ell^1(\N)$.  

Let $\H \subseteq L^2(\marginal)$ be the RKHS corresponding to
$K$  \citep{aronszajn1950theory} and let  $\phi_j \ = \ t_j\psi_j$, $j = 1,2,\ldots$.
It follows from basic facts about RKHSs that $\braces{\phi_j}_{j = 1}^{\infty}$ is an orthonormal
basis for $\H$ (if $t_J^2 > t_{J+1}^2 = 0$, then $\Braces{\phi_j}_{j=1}^J$ is an orthonormal basis for
$\H$).
Furthermore, $\H$ is characterized by
\[
  \H \ = \ \Braces{
    f = \sum_{j = 1}^{\infty} \th_j\psi_j \in L^2(\marginal);
    \ \sum_{j = 1}^{\infty} \frac{\th_j^2}{t_j^2} < \infty
  }
\]
and the inner product
\[
  \dotp{ f, \tilde f }_{\H}
  \ = \
  \Dotp{
    \sum_{j = 1}^{\infty} \th_j\psi_j,\sum_{j = 1}^{\infty} \tilde{\th}_j\psi_j
  }_{\H}
  \ = \
  \sum_{j = 1}^{\infty} \frac{\th_j\tilde{\th}_j}{t_j^2}
  \,
\]
(the corresponding norm is denoted by $\norm{\cdot}_{\H}$).
Our main assumption on the relationship between $y$, $\x$, and the kernel $K$ is that 
\begin{equation}\label{reg}
  \reg \ \in \ \H
  \,.
\end{equation}
This is a regularity or smoothness assumption on $\reg$. Many of the results in this paper can be modified, so that they apply to settings where $\reg \notin
\H$, by replacing $\reg$ with an appropriate projection of $\reg$ onto $\H$ and including an
approximation error term in the corresponding bounds.
This approach leads to the study of {\it oracle inequalities} \citep{zhang2005learning,zhang2013divide, 
koltchinskii2006local, steinwart2009optimal, hsu2014random}, which we do not pursue
in detail here.
However, investigating oracle inequalities for general regularization operators may be of interest
for future research, as most existing work focuses on ridge regularization.  

Another interpretation of condition \eqref{reg} is that it is a minimal regularity condition on
$\reg$ for ensuring that $\reg$ can be estimated consistently using the kernel methods considered
below.  One key aspect of the upper bounds in Section \ref{sec:main} is that they
show $\reg$ can be estimated more efficiently, if it satisfies
stronger regularity conditions (and if the regularization method used
is sufficiently adaptable). 
A convenient way to formulate a collection of regularity conditions
with varying strengths, which will be useful in the sequel, is as follows.
For $\zeta \geq 0$, define the Hilbert space
\begin{equation}\label{Hzeta}
  \H_\zeta
  \ = \
  \Braces{
    f = \sum_{j = 1}^{\infty} \th_j\psi_j \in L^2(\marginal);
    \ \sum_{j = 1}^{\infty} \frac{\th_j^2}{t_j^{2(1+\zeta)}} < \infty
  }
  \,.
\end{equation}
Then $\H = \H_0$ and $\H_{\zeta_2} \subseteq \H_{\zeta_1}$ whenever $\zeta_2 \geq \zeta_1 \geq 0$.
The norm on $\H_{\zeta}$ is defined by $\norm{ f }_{\H_{\zeta}}^2
  \ = \
  \sum_{j = 1}^{\infty} \frac{\th_j^2}{t_j^{2(1+\zeta)}}$.
Additionally, positive integers $J$ define the finite-rank subspace
\begin{equation}\label{HJ}
  \H_J^\circ
  \ = \
  \Braces{
    f = \sum_{j = 1}^{J} \th_j\psi_j \in L^2(\marginal);
    \ \th_1,\ldots,\th_J \in \R
  }
  \,.
\end{equation} 
We have the inclusion $\H_J^\circ \subseteq \H_{J+1}^\circ$ and, if $t_1^2,\ldots,t_J^2 > 0$, then
$\H_J^\circ \subseteq \H_\zeta$ for any $\zeta \geq 0$.
In particular, it is clear from \eqref{Hzeta}--\eqref{HJ} that $\reg \in \H_{\zeta}$ is a stronger regularity condition than $\reg \in \H$ and that $\reg \in \H_J^{\circ}$ is an even stronger condition (provided the $t_j^2$ are
strictly positive).
Conditions such as $\reg \in \H_{\zeta}$ and $\reg \in \H_J^{\circ}$ are known as {\em source
conditions} elsewhere in the literature \citep[e.g.,][]{bauer2007regularization,caponnetto2010cross}.

\section{Regularization} 

As discussed in Section~\ref{sec:intro}, our goal is find estimators $\est$ that minimize the
risk~\eqref{risk}.
In this paper, we focus on regularization-based estimators for $\reg$.
In order to precisely describe these estimators, we require some additional notation for various
operators that will be of interest, and some basic definitions from regularization theory.  

\subsection{Finite-rank operators of interest}

For $\x \in \cX$, define $K_{\x} \in \H$ by $K_{\x}(\tilde{\x}) = K(\x,\tilde{\x})$, $\tilde{\x} \in
\cX$.  Let $X = (\x_1,\dotsc,\x_n)^\top \in \R^{n \times d}$ and $\y = (y_1,\dotsc,y_n)^\top \in
\R^n$.
Additionally, define the finite-rank linear operators $S_X: \H \to \R^n$ and $T_X: \H \to \H$ (both
depending on $X$) by
\begin{align*}
  S_X\phi
  & \ = \
  (\dotp{ \phi,K_{\x_1} }_{\H}, \dotsc, \dotp{ \phi,K_{\x_n} }_{\H})^{\top}
  \ = \ (\phi(\x_1), \dotsc, \phi(\x_n))^{\top} \,, \\
  T_X\phi
  & \ = \
  \frac{1}{n}\sum_{i = 1}^n \dotp{ \phi,K_{\x_i} }_{\H} K_{\x_i}
  \ = \ \frac{1}{n} \sum_{i = 1}^n \phi(\x_i) K_{\x_i}
  \,,
\end{align*}
where $\phi \in \H$.
Let $\dotp{\cdot,\cdot}_{\R^n}$ denote the normalized inner-product on $\R^n$, defined by $\dotp{
\bfv,\tilde\bfv }_{\R^n} = n^{-1} \bfv^\top\tilde\bfv$ for $\bfv = (v_1,\dotsc,v_n)^\top, \
\tilde\bfv = (\tilde{v}_1,\dotsc,\tilde{v}_n)^\top \in \R^n$.
Then the adjoint of
$S_X$ with respect to $\dotp{\cdot,\cdot}_{\H}$ and
$\dotp{\cdot,\cdot}_{\R^n}$, $S_X^*: \R^n \to \H$, is given by 
$S_X^*\bfv
  \ = \
  \frac{1}{n}\sum_{i = 1}^n v_iK_{\x_i}$.  
Additionally, we have $T_X = S_X^*S_X$.
Finally, observe that $S_XS_X^* \colon \R^n \to \R^n$ is given by the $n \times n$ matrix $S_XS_X^*
= n^{-1}\K$, where $\K = (K(\x_i,\x_j))_{1 \leq i,j\leq n}$; $\K$ is the {\it kernel matrix}, which
is ubiquitous in kernel methods and enables finite computation.

\subsection{Basic definitions}

A family of functions $\linv \colon \intco{0,\infty} \to \intco{0,\infty}$ indexed by $\l > 0$ is
called a \emph{regularization family} if it satisfies the following three conditions:\footnote{%
  This definition follows \citet{engl1996regularization} and \citet{bauer2007regularization}, but is
  slightly more restrictive.%
}
\begin{description}
  \item[R1] $\sup_{0 < t \leq \k^2} |t \linv(t)| < 1$.
  \item[R2] $\sup_{0 < t \leq \k^2} |1 - t \linv(t)| \leq 1$. 
  \item[R3] $\sup_{0 < t \leq \k^2} | \linv(t)| < \l^{-1}$.
\end{description}
The main idea behind a regularization family is that it ``looks'' similar to $t \mapsto 1/t$, but is
better-behaved near $t = 0$, i.e., it is bounded by $\l^{-1}$.  
An important quantity that is related to the adaptability of a regularization
family is the {\em qualification} of the regularization.
The qualification of the regularization family $\braces{\linv}_{\l > 0}$ is defined to be the
maximal $\xi \geq 0$ such that
\[
  \sup_{0< t \leq \k^2}
  \abs{ 1 - t \linv(t) }
  t^{\xi}
  \ \leq \
  \l^{\xi}
  \,.
\]
If a regularization family has qualification $\xi$, we say that it ``saturates at $\xi$.''
Two regularization families that are the major motivation for the results in this paper are ridge
(Tikhonov) regularization, where $\linv(t) = \ \frac1{t+\l}$
and principal component (spectral cut-off) regularization, where
\begin{equation}\label{PCA}
  \linv(t) \ = \ \sinv(t) \ = \ \frac1t \ind{ t \geq \l }
  \,.
\end{equation}
Observe that ridge regularization has qualification 1 and principal
component regularization has qualification $\infty$.  
Another example of a regularization family with qualification $\infty$
is the Landweber iteration, which can be viewed as a special case of
gradient descent~\citep[see,
e.g.,][]{rosasco2005spectral,bauer2007regularization,logerfo2008spectral}.

\subsection{Estimators}
\label{sec:est}

Given a regularization family $\braces{\linv}_{\l > 0}$, we define the $\linv$-regularized estimator
for $\reg$, 
\begin{equation}\label{regest}
  \estl
  \ = \
  \linv(T_X) S_X^* \y
  \,.
\end{equation}
Here, $\linv$ acts on the spectrum (eigenvalues) of the finite-rank operator $T_X$ (which is the
same as the spectrum of the kernel matrix $\K$, up to scaling).
Therefore, a finitely-computable representation is $\estl = \sum_{i=1}^n \hat\gamma_i
K_{\x_i}$, where $(\hat\gamma_1,\dotsc,\hat\gamma_n)^\top = \linv(\K/n) \y / n$; computing the
$\hat\gamma_i$ involves an eigenvalue decomposition of the matrix $\K$.
The dependence of $\estl$ on the regularization family is implicit; our results hold for any
regularization family except where explicitly stated otherwise.
The estimators $\estl$ are the main focus of this paper.  

\section{Main results}\label{sec:main}

\subsection{General bound on the risk}

\begin{theorem}\label{thm:UB}
  Let $\estl$ be the estimator defined in \eqref{regest} with regularization family
  $\braces{\linv}_{\l>0}$.
  Let $0 \leq \d \leq 1$ and assume that there is some $\k_{\d}^2 > 0$ such that
  \begin{equation}\label{psibd}
    \sup_{\x \in \cX}
    \sum_{j = 1}^{\infty} t_j^{2(1 - \d)} \psi_j(\x)^2
    \ \leq \
    \k_{\d}^2
    \ < \
    \infty
  \end{equation}
  Assume that the source condition $\reg \in \H_{\zeta}$ holds for some $\zeta \geq 0$, and that $\linv$
  has qualification at least $\max\braces{(\zeta+1)/2,1}$.  
  Define the effective dimension $\dl =
    \sum_{j=1}^\infty
    \frac{t_j^2}{t_j^2 + \l}$.
  Finally, assume that $(8/3+2\sqrt{5/3}) \k_{\d}^2 / n \leq \l^{1-\d} \leq \k_{\d}^2$.
  The following risk bound holds:
  \begin{align}\label{UB}
    \risk(\estl)
    & \ \leq \
    2^{\zeta+3}\norm{\reg}_{\H_\zeta}^2 \l^{\zeta+1}
    + \frac{4\dl\s^2}{n}
    \\ \nonumber
    & \qquad
    + 4\dl \Parens{
      \norm{\reg}_{\H}^2 t_1^2
      + \frac{\k^2\s^2}{\l n}
    }
    \exp\Biggl(-\frac{3\l^{1-\d}n}{28\k_{\d}^2}\Biggr)
    \\ \nonumber
    & \qquad
    + \ind{\zeta>1} \cdot 16\zeta^2(3/2)^{\zeta-1}
    \cdot \norm{\reg}_{\H_\zeta}^2
    (t_1^2+\l)^{\zeta-1} \k^4
    \Biggl(
      \frac{34}{n} + \frac{15}{n^2}
    \Biggr)
    \,.
  \end{align}
\end{theorem}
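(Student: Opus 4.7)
The plan is to follow the classical bias--variance decomposition for regularization-based RKHS estimators, combining the qualification hypothesis with Hilbert--Schmidt operator concentration to exploit the source condition $\reg \in \H_\zeta$. Writing $\y = S_X\reg + \ee$ with $\E[\ee\mid X] = 0$, $S_X^*S_X = T_X$, and setting $\rinv(t) := 1 - t\linv(t)$, the definition of $\estl$ yields
\begin{align*}
  \reg - \estl \;=\; \rinv(T_X)\reg \;-\; \linv(T_X)S_X^*\ee.
\end{align*}
Using $\|f\|_{\marginal} = \|T^{1/2}f\|_{\H}$ for $f\in\H$ (where $T\phi = \int \phi(\x) K_\x\, d\marginal(\x)$), together with the mean-zero and independence properties of $\ee$, this gives the exact decomposition
\begin{align*}
  \risk(\estl) \;=\; \E\bigl\|T^{1/2}\rinv(T_X)\reg\bigr\|_{\H}^2 + \E\bigl\|T^{1/2}\linv(T_X)S_X^*\ee\bigr\|_{\H}^2 \;=:\; B + V.
\end{align*}
The source condition supplies $g \in \H$ with $\reg = T^{\zeta/2}g$ and $\|g\|_{\H} = \|\reg\|_{\H_\zeta}$, so that $B \leq \|\reg\|_{\H_\zeta}^2\,\E\bigl\|T^{1/2}\rinv(T_X)T^{\zeta/2}\bigr\|_{\mathrm{op}}^2$.

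The central technical device is a high-probability event $E_\l$ on which the regularized empirical covariance is within a constant factor of its population version, concretely $\|(T+\l I)^{-1/2}(T - T_X)(T+\l I)^{-1/2}\|_{\mathrm{op}} \leq 1/2$. Under \eqref{psibd} and the lower bound on $\l^{1-\d}n$, the Hilbert--Schmidt Bernstein inequalities in the appendix (building on \citet{tropp2015intro,minsker2011bernstein}) give $\Pr(E_\l^c) \leq \exp(-3\l^{1-\d}n/(28\k_\d^2))$, which is exactly the exponent appearing in \eqref{UB}. On $E_\l$ the square-root ``sandwich'' $(T+\l I)^{1/2} \asymp (T_X+\l I)^{1/2}$ transfers powers up to $1/2$ between $T$ and $T_X$ at constant cost. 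For the $\zeta \leq 1$ portion of $B$ I would push $T^{1/2}$ and $T^{\zeta/2}$ onto the $T_X$ side and then invoke the qualification inequality $\|\rinv(T_X)T_X^{(\zeta+1)/2}\|_{\mathrm{op}} \leq \l^{(\zeta+1)/2}$; squaring yields the $2^{\zeta+3}\|\reg\|_{\H_\zeta}^2\l^{\zeta+1}$ term, the constant $2^{\zeta+3}$ coming from the constants in the sandwich raised to the power $\zeta+1$. For $V$, conditioning on $X$ and using \eqref{resid} gives $V \leq (\s^2/n)\,\E\,\tr(T\linv(T_X)^2 T_X)$, and replacing $T$ by $T_X+\l I$ on $E_\l$, combined with the scalar bounds $t\linv(t)^2 \leq \linv(t) \leq 1/\l$ and $t\linv(t) \leq 1$, reduces this to the effective dimension $\dl$ and produces the $4\dl\s^2/n$ term. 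The third line of \eqref{UB} collects the contributions from the complementary event $E_\l^c$: on this event I use the crude bounds $\|\linv(T_X)\|_{\mathrm{op}} \leq 1/\l$, $\|\rinv(T_X)\|_{\mathrm{op}} \leq 1$, and $\|T_X\|_{\mathrm{op}} \leq t_1^2$, bounding both $B$ and $V$ by $\dl$ times the bracketed quantity and then multiplying by $\Pr(E_\l^c)$.

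The main obstacle is the $\ind{\zeta>1}$ correction. When $\zeta > 1$, transferring $T^{\zeta/2}$ to $T_X^{\zeta/2}$ through the square-root sandwich is no longer lossless because $\zeta/2$ exceeds the $1/2$ threshold at which that sandwich is tight. To handle this I would factor $T^{\zeta/2}$ as $(T+\l I)^{1/2}\cdot[(T+\l I)^{-1/2}T^{\zeta/2}]$ and then control $(T_X+\l I)^{p} - (T+\l I)^{p}$ for the exponent $p$ in $((\zeta-1)/2,\zeta/2]$ via the operator-Lipschitz inequality $\|A^p - B^p\|_{\mathrm{op}} \leq p\max(\|A\|,\|B\|)^{p-1}\|A-B\|_{\mathrm{HS}}$ with $A = T+\l I$ and $B = T_X+\l I$. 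The factor $\max(\|A\|,\|B\|)^{p-1}$ yields $(t_1^2+\l)^{\zeta-1}$ on $E_\l$, the derivative $p$ contributes the $\zeta^2$ after squaring, and a second Bernstein-type bound applied to $T_X - T$ in Hilbert--Schmidt norm (carrying both a first- and second-moment term) supplies the $\k^4(34/n + 15/n^2)$ factor. Propagating the explicit constants $16$ and $(3/2)^{\zeta-1}$ through this telescoping, and in particular ensuring that the Hilbert--Schmidt (rather than operator) norm enters at the right place so that the final rate is $O(1/n)$ rather than slower, is the most delicate point of the argument.
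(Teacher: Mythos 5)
Your overall route is essentially the paper's: the exact bias--variance decomposition, a good event on which $\norm{(T+\l I)^{-1/2}(T_X-T)(T+\l I)^{-1/2}}\le 1/2$, the qualification inequality for the leading bias term, Von Neumann plus the effective dimension $\dl$ for the variance, and an operator-power perturbation bound together with a second-moment bound on $T_X-T$ for the $\zeta>1$ correction (the paper does this with its lemmas on differences of operator powers in operator norm; your Hilbert--Schmidt variant of the Lipschitz step is also workable, since $\E\norm{T_X-T}_{\mathrm{HS}}^2\le\k^4/n$, and would in fact give a smaller remainder than the stated $\k^4(34/n+15/n^2)$).

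The genuine gap is in your accounting for the third line of \eqref{UB}. You assert the dimension-free tail bound $P(E_\l^c)\le\exp\parens{-3\l^{1-\d}n/(28\k_{\d}^2)}$, but the operator Bernstein inequalities you cite do not deliver this: in infinite dimensions the bound necessarily carries an intrinsic-dimension prefactor, and in the paper this prefactor is precisely $4\dl$ (Lemma~\ref{lem:tail}, applied with $D=\dl$ because $\tr\parens{\E\brackets{\Y^2}}\le \dl\k_{\d}^2/(\l^{1-\d}n)$ while $\norm{\E\brackets{\Y^2}}\le \k_{\d}^2/(\l^{1-\d}n)$). You then try to recover the missing $\dl$ by claiming that the crude bounds for the bias and variance on the bad event are ``$\dl$ times the bracketed quantity''; they are not. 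The crude bounds are $t_1^2\norm{\reg}_{\H}^2$ (from $\norm{T^{1/2}}^2\le t_1^2$ and $\norm{I-T_X\linv(T_X)}\le1$) and $\k^2\s^2/(\l n)$ (from $\tr\parens{T\linv(T_X)^2T_X}\le\k^2/\l$), with no $\dl$ anywhere, and inserting one is not a legitimate upper-bounding step (note $\dl$ can even be smaller than $1$). The two misstatements happen to offset so that your final expression coincides with \eqref{UB}, but as written neither step is justified; the fix is to place the $4\dl$ where it belongs, in the tail probability via the intrinsic-dimension Bernstein bound, and use the $\dl$-free crude bounds on the complement, exactly as the paper does. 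The remaining pieces of your sketch (the qualification step for $\zeta\le1$, the variance reduction to $\dl$, and the $\zeta>1$ power-difference term yielding $(t_1^2+\l)^{\zeta-1}$ after squaring) match the paper's argument, modulo the constant bookkeeping you flag yourself.
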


Theorem \ref{thm:UB} is proved in Appendix~\ref{sec:proof-UB}.  The
first two terms in the upper bound \eqref{UB} are typically the dominant terms.  
In the upper bound \eqref{UB}, the interaction between the kernel $K$ and the distribution $\marginal$ is reflected in the
\emph{effective dimension} $\dl$~\citep[see, e.g.,][]{zhang2005learning,caponnetto2007optimal}.
The regularity of $\reg$ enters through norm of $\reg$ (both the $\H$-
and $\H_{\zeta}$-norms) and the
exponent on $\l$.

The
condition \eqref{psibd} in Theorem \ref{thm:UB} is always satisfied by taking $\d = 0$ and
$\kappa_0^2 = \kappa^2$.  Requiring \eqref{psibd} with $\d > 0$
imposes additional conditions on the RKHS $\H$.  For Corollary
\ref{cor:poly} below (which applies when the eigenvalues $\{t_j^2\}$
have polynomial-decay), we take $\d = 0$ and
$\kappa_0^2 = \kappa^2$.  The stronger condition with $\d > 0$ is required
to obtain obtain minimax rates for
kernels where the eigenvalues $\{t_j^2\}$ have exponential or
Gaussian-type decay (see Corollaries \ref{cor:exp}--\ref{cor:gauss}).

Risk bounds on general regularization
estimators similar to $\estl$ were previously obtained by
\citet{bauer2007regularization}.
However, their bounds \citep[e.g., Theorem 10 in][]{bauer2007regularization} are independent of the
ambient RKHS $\H$, i.e., they do not depend on the eigenvalues $\braces{t_j^2}$.
Our bounds are tighter than those of \citet{bauer2007regularization} because we take advantage of
the structure of $\H$.  
In contrast with our Theorem \ref{thm:UB}, the results of \citet{bauer2007regularization} do not give
minimax bounds (not easily, at least), because minimax rates must
depend on the $t_j^2$.  

\subsection{Implications for kernels characterized by their eigenvalues' rate of
  decay}  

We now state consequences of Theorem~\ref{thm:UB} that give explicit rates for estimating $\reg$ via
$\estl$, for any regularization family, under specific assumptions about the decay rate of the
eigenvalues $\{t_j^2\}$.

We first consider the case where the eigenvalues have polynomial decay.

\begin{corollary}\label{cor:poly}
  Assume that $C \geq 0$ and $\nu > 1/2$ are constants such that $0 < t_j^2 \leq Cj^{-2\nu}$ for all
  $j = 1,2,\ldots$.
  Assume the source condition $\reg \in \H_{\zeta}$ for some $\zeta \geq 0$, and that $\linv$
  has qualification at least $\max\braces{(\zeta+1)/2,1}$.
  Finally, take $\l = C' n^{-\frac{2\nu}{2\nu(\zeta + 1) + 1}}$
  for a suitable constant $C'>0$ so that the conditions on $\l$ from Theorem~\ref{thm:UB} are satisfied.
  Then
  \begin{align*}
    \risk(\estl)
    & \ = \
    O\Parens{
      \Braces{ \norm{\reg}_{\H_\zeta}^2 + \s^2 }
      \cdot
      n^{-\frac{2\nu(\zeta+1)}{2\nu(\zeta + 1) + 1}}
    }
    \,,
  \end{align*}
  where the constants implicit in the big-$O$ may depend on $\k^2$, $C$, $C'$, $\nu$, and
  $\zeta$, but nothing else.
\end{corollary}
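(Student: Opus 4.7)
The plan is to apply Theorem~\ref{thm:UB} with $\delta = 0$. Since the Mercer expansion \eqref{mercer} gives $\sum_{j=1}^\infty t_j^2 \psi_j(\x)^2 = K(\x,\x) \leq \kappa^2$, condition \eqref{psibd} holds automatically with $\kappa_0^2 = \kappa^2$; the source and qualification hypotheses are assumed, so the bound \eqref{UB} is available once the $\l$-range restriction is verified. The proof then reduces to bounding each of the four terms in \eqref{UB} under the prescribed choice of $\l$.

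The main quantitative ingredient is the effective-dimension estimate $\dl = O(\l^{-1/(2\nu)})$. I obtain this by splitting the sum defining $\dl$ at $j_0 = \lceil (C/\l)^{1/(2\nu)} \rceil$ and using $t_j^2 \leq C j^{-2\nu}$: the first $j_0$ fractions are each bounded by $1$, while for $j > j_0$ one has $t_j^2/(t_j^2+\l) \leq (C/\l) j^{-2\nu}$, and $\sum_{j>j_0} j^{-2\nu} = O(j_0^{1-2\nu})$, which is finite because $\nu > 1/2$. Both contributions are of order $(C/\l)^{1/(2\nu)}$, yielding $\dl = O(\l^{-1/(2\nu)})$ with a constant depending only on $C$ and $\nu$.

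With $\l = C' n^{-2\nu/(2\nu(\zeta+1)+1)}$, the bias-type term $\norm{\reg}_{\H_\zeta}^2 \l^{\zeta+1}$ and the variance term $\dl \sigma^2/n \asymp \l^{-1/(2\nu)}/n$ are balanced: both are of order $n^{-2\nu(\zeta+1)/(2\nu(\zeta+1)+1)}$, which is the target rate. For the exponential term, the prefactor $\dl(\norm{\reg}_{\H}^2 t_1^2 + \kappa^2 \sigma^2/(\l n))$ grows only polynomially in $n$, while $\l n = C' n^{(2\nu\zeta + 1)/(2\nu(\zeta+1)+1)}$ diverges at a positive power of $n$; hence $\exp(-3\l n/(28\kappa^2))$ drives that entire term to zero faster than any polynomial. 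The $\zeta > 1$ correction term is $O((t_1^2+\l)^{\zeta-1}/n) = O(1/n)$, which is dominated by the target rate since $2\nu(\zeta+1)/(2\nu(\zeta+1)+1) < 1$. The range condition $(8/3+2\sqrt{5/3})\kappa^2/n \leq \l \leq \kappa^2$ is satisfied for any $C' \leq \kappa^2$ and all $n$ larger than a constant depending only on $C', \nu, \zeta, \kappa$, because $\l n \to \infty$ at a positive power of $n$.

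The only real obstacle is bookkeeping: tracking how the constants $\kappa, C, C', \nu, \zeta$ propagate through the four terms of \eqref{UB} and confirming that only the bias and variance terms contribute at the leading order. No new technique is required beyond the effective-dimension estimate and the standard bias--variance balancing on top of Theorem~\ref{thm:UB}.
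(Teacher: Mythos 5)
Your proposal is correct and follows exactly the route the paper intends: it invokes Theorem~\ref{thm:UB} with $\d=0$, $\k_0^2=\k^2$ (as the paper itself notes for this corollary), establishes $\dl = O(\l^{-1/(2\nu)})$ by the standard split of the sum at $j_0 \asymp (C/\l)^{1/(2\nu)}$ using $\nu>1/2$, and balances the bias term $\l^{\zeta+1}$ against the variance term $\dl\s^2/n$ at the prescribed $\l$, with the exponential and $\zeta>1$ terms checked to be lower order. The one bookkeeping item you leave implicit—replacing the $\norm{\reg}_{\H}^2$ factor in the exponentially small term by an allowed constant via $\norm{\reg}_{\H}^2 \leq \k^{2\zeta}\norm{\reg}_{\H_\zeta}^2$—is routine and does not affect correctness.
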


\begin{remark}
  Observe that if $\linv$ has qualification at least $\max\braces{(\zeta+1)/2,1}$ (and the other
  conditions of Corollary \ref{cor:poly} are met), then $\hat{f}_{\l}$ obtains the minimax rate for
  estimating functions over $\H_{\zeta}$ \citep{pinsker1980optimal}.
  Thus, if $g_{\lambda}$ has higher qualification, then $\hat{f}_{\l}$
  can effectively adapt to a broader range of subspaces 
  $\H_{\zeta} \subseteq \H = \H_0$.   In particular, KPCR (with infinite qualification) can
 adapt to source conditions with arbitrary $\zeta \geq
  0$; on the other hand, KRR satisfies the conditions of Corollary \ref{cor:poly} only when $\zeta
  \leq 1$, because KRR has qualification 1.  
\end{remark}

\begin{remark}
  As mentioned earlier, a very similar result for polynomial decay eigenvalues was independently and
  simultaneously obtained by \citet{blanchard2016optimal} for essentially the same class of
  regularization operators.  Our Theorem \ref{thm:UB}, from
  which the corollary follows, applies to a broader
  class of kernels than results of \citeauthor{blanchard2016optimal}.
\end{remark}

When the eigenvalues $\braces{t_j^2}$ have exponential or Gaussian-type decay, the rates are nearly
the same as in finite dimensions.

\begin{corollary}\label{cor:exp}
  Assume that $C, \a \geq 0$ are constants such that $0 < t_j^2 \leq Ce^{-\a j}$ for all
  $j = 1,2,\ldots$.
  Assume that $\linv$
  has qualification at least $1$ and that \eqref{psibd} holds for any $0 < \d \leq 1$.
  Finally, take $\l = C'n^{-1}\log(n)$ for a suitable constant $C'>0$ so that the conditions on $\l$
  from Theorem~\ref{thm:UB} are satisfied.
  Then
  \begin{align*}
    \risk(\estl)
    & \ = \
    O\Parens{
      \Braces{ \norm{\reg}_{\H}^2 + \s^2 }
      \cdot
      \frac{\log(n)}{n}
    }
    \,,
  \end{align*}
  where the constants implicit in the big-$O$ may depend on $\k^2$, $C$, $C'$, $\a$, $\d$,
  and $\k_{\d}^2$, but nothing else.  
\end{corollary}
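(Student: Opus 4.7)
The plan is to apply Theorem~\ref{thm:UB} with source exponent $\zeta = 0$, since the only regularity imposed is $\reg \in \H = \H_0$, and with some fixed $\d \in (0,1]$ for which \eqref{psibd} holds. When $\zeta = 0$ the required qualification $\max\{(\zeta+1)/2, 1\} = 1$ coincides with the hypothesis of the corollary, and the last term in \eqref{UB} vanishes because $\ind{\zeta > 1} = 0$. It therefore suffices to control (i) the bias term $8 \norm{\reg}_{\H}^2 \l$, (ii) the variance term $4 \dl \s^2 / n$, and (iii) the exponential remainder.

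The main preliminary step is to estimate the effective dimension under exponential eigenvalue decay. Using $\frac{t_j^2}{t_j^2 + \l} \leq \min\{1, t_j^2/\l\}$ together with $t_j^2 \leq C e^{-\a j}$, I split the sum at $j^{\star} = \ceil{\a^{-1} \log(C/\l)}$: the first $j^{\star}$ terms each contribute at most $1$, while for $j > j^{\star}$ one has $C e^{-\a j} \leq \l \cdot e^{-\a(j-j^{\star})}$, so the tail $\sum_{j > j^{\star}} t_j^2/\l$ is dominated by a geometric series summing to a constant depending only on $\a$. This yields $\dl \leq \a^{-1} \log(C/\l) + O_{\a}(1) = O(\log(1/\l))$.

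Plugging $\l = C' \log(n)/n$ into \eqref{UB}, the bias equals $8 C' \norm{\reg}_{\H}^2 \log(n)/n$ and the variance is $4 \dl \s^2/n = O(\log(n) \s^2/n)$, both matching the claimed rate. For the exponential remainder, observe that $\l^{1-\d} n = (C')^{1-\d} (\log n)^{1-\d} n^{\d}$ diverges faster than any power of $\log n$ for any fixed $\d > 0$, so $\exp\Parens{-3 \l^{1-\d} n / (28 \k_{\d}^2)}$ decays super-polynomially in $n$; multiplied by the polylogarithmic prefactor $\dl \Parens{\norm{\reg}_{\H}^2 t_1^2 + \k^2 \s^2/(\l n)}$ the whole term is $o(1/n)$. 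The preconditions $(8/3+2\sqrt{5/3}) \k_{\d}^2/n \leq \l^{1-\d} \leq \k_{\d}^2$ hold for all $n$ sufficiently large once $C'$ is chosen large enough, since $\l \to 0$ and $\l^{1-\d} n \to \infty$.

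The only conceptually non-routine ingredient is the bound $\dl = O(\log(1/\l))$, which is where the exponential decay of the eigenvalues is actually used; balancing the $\Theta(\l)$ bias against the $\Theta(\log(1/\l)/n)$ variance forces the choice $\l \asymp \log(n)/n$ and produces the single-logarithm loss over the parametric rate $1/n$.
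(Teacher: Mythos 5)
Your proof is correct and takes essentially the route the paper intends: the corollary is a direct consequence of Theorem~\ref{thm:UB} with $\zeta=0$, combined with the bound $\dl = O(\log(1/\l))$ for exponentially decaying eigenvalues, the choice $\l \asymp \log(n)/n$, and the observation that the remainder term is negligible because $\l^{1-\d}n \asymp n^{\d}(\log n)^{1-\d}$ makes the exponential factor decay super-polynomially. (Your geometric-series tail bound of course requires $\a>0$, which is what the paper intends even though it writes $\a \geq 0$.)
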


\begin{corollary}\label{cor:gauss}
  Assume that $C, \a \geq 0$ are constants such that $0 < t_j^2 \leq Ce^{-\a j^2}$ for all
  $j = 1,2,\ldots$.
  Assume that $\linv$
  has qualification at least $1$ and that \eqref{psibd} holds for any $0 < \d \leq 1$.
  Finally, take $\l = C'n^{-1}\sqrt{\log(n)}$ for a suitable constant $C'>0$ so that the conditions
  on $\l$ from Theorem~\ref{thm:UB} are satisfied.
  Then
  \begin{align*}
    \risk(\estl)
    & \ = \
    O\Parens{
      \Braces{ \norm{\reg}_{\H}^2 + \s^2 }
      \cdot
      \frac{\sqrt{\log(n)}}{n}
    }
    \,,
  \end{align*}
  where the constants implicit in the big-$O$ may depend on $\k^2$, $C$, $C'$, $\a$, $\d$,
  and $\k_{\d}^2$, but nothing else.  
\end{corollary}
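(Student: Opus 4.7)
The plan is to derive Corollary \ref{cor:gauss} as a direct application of Theorem \ref{thm:UB} with source exponent $\zeta = 0$, exploiting the Gaussian-type decay of the eigenvalues to estimate the effective dimension $\dl$, and then balancing the two leading terms of \eqref{UB} by the prescribed choice $\l = C'n^{-1}\sqrt{\log n}$.

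\textbf{Step 1: Instantiate Theorem \ref{thm:UB}.} I would take $\zeta = 0$ (so that $\reg \in \H = \H_0$ is the only source condition), fix any $\d \in (0,1]$ for which \eqref{psibd} holds (with the associated constant $\k_\d^2$), and note that $\max\{(\zeta+1)/2,1\} = 1$, so the stated qualification hypothesis is sufficient. Because $\zeta = 0 \not> 1$, the indicator term in \eqref{UB} vanishes, and the bound reduces to
\begin{equation*}
\risk(\estl) \ \le \ 8\norm{\reg}_{\H}^2\, \l \;+\; \frac{4\dl\s^2}{n} \;+\; 4\dl\Parens{\norm{\reg}_\H^2 t_1^2 + \frac{\k^2\s^2}{\l n}}\exp\Parens{-\tfrac{3\l^{1-\d}n}{28\k_\d^2}}.
\end{equation*}

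\textbf{Step 2: Estimate the effective dimension for Gaussian decay.} This is the main computation. Using $t_j^2 \le Ce^{-\a j^2}$, I would split the sum defining $\dl$ at $J^\star = \lceil\sqrt{\a^{-1}\log(C/\l)}\,\rceil$, the approximate threshold where $t_j^2 \approx \l$. The terms $j \le J^\star$ contribute at most $J^\star$; for $j > J^\star$, I bound $t_j^2/(t_j^2+\l) \le t_j^2/\l$ and use a standard Gaussian tail estimate $\sum_{j>J^\star}e^{-\a j^2} \le \int_{J^\star}^{\infty}e^{-\a x^2}\,dx \le (2\a J^\star)^{-1}e^{-\a(J^\star)^2}$, which gives a tail contribution of order $1/J^\star$. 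Consequently,
\begin{equation*}
\dl \ = \ O\Parens{\sqrt{\log(1/\l)}}.
\end{equation*}

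\textbf{Step 3: Plug in $\l = C'\sqrt{\log n}/n$ and verify the hypotheses.} With this choice, $\log(1/\l) = \log n - \tfrac12\log\log n - \log C' = O(\log n)$, so $\dl = O(\sqrt{\log n})$. The admissibility range on $\l$ in Theorem \ref{thm:UB} requires $\l^{1-\d} \in [c_1\k_\d^2/n,\,\k_\d^2]$; since $\l^{1-\d} n = (C')^{1-\d}(\log n)^{(1-\d)/2}n^\d \to \infty$ for any $\d > 0$, both inequalities hold for $n$ sufficiently large, with $C'$ adjusted to absorb the constant $c_1$. For the leading terms, $\l = O(\sqrt{\log n}/n)$ and $\dl/n = O(\sqrt{\log n}/n)$, both matching the target rate.

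\textbf{Step 4: Dispose of the exponential remainder.} The exponent satisfies $\l^{1-\d}n/\k_\d^2 \gtrsim n^\d(\log n)^{(1-\d)/2}$, which grows faster than $\log n$, so $\exp(-3\l^{1-\d}n/(28\k_\d^2))$ decays faster than any polynomial in $n$. Its prefactor is bounded by $\dl\cdot(\norm{\reg}_\H^2 t_1^2 + \k^2\s^2/(\l n)) = O(\sqrt{\log n}\cdot(1 + 1/\sqrt{\log n})) = O(\sqrt{\log n})$, so the entire remainder term is $o(1/n^k)$ for every $k$, hence negligible compared to $\sqrt{\log n}/n$. Collecting the contributions yields the claimed $O((\norm{\reg}_\H^2+\s^2)\sqrt{\log n}/n)$ bound, with the implicit constants depending only on $\k^2, C, C', \a, \d, \k_\d^2$.

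The sole nontrivial piece is Step 2: the tail estimate for $\sum_{j>J^\star}e^{-\a j^2}$ is what makes the Gaussian case give $\sqrt{\log n}$ rather than the $\log n$ of Corollary \ref{cor:exp}. Everything else is bookkeeping to check the conditions of Theorem \ref{thm:UB}.
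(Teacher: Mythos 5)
Your proposal is correct and follows exactly the intended derivation: the paper states Corollary~\ref{cor:gauss} as a direct consequence of Theorem~\ref{thm:UB}, and the only substantive ingredient is the bound $\dl = O(\sqrt{\log(1/\l)})$ under Gaussian-type eigenvalue decay, which you obtain via the standard split at $J^\star \approx \sqrt{\a^{-1}\log(C/\l)}$ and a Gaussian tail estimate (implicitly using $\a>0$, as the corollary intends). The remaining bookkeeping---taking $\zeta=0$, verifying the admissibility of $\l = C'\sqrt{\log n}/n$, and showing the exponential remainder is superpolynomially small since $\l^{1-\d}n \gtrsim n^{\d}$---is handled correctly.
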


\begin{remark}
  In Corollaries \ref{cor:exp} --\ref{cor:gauss}, we get minimax
  estimation over $\H = \H_0$ \citep{pinsker1980optimal}.
  However, our bounds are not refined enough to pick up any potential improvements which may be had
  if  a stronger source condition is satisfied (e.g., $\reg \in \H_{\zeta}$ for $\zeta > 0$) .
  This is typical in settings like this because the minimax rate is already quite fast, i.e., within
  a log-factor of the parametric rate $n^{-1}$.  
\end{remark}

\subsection{Parametric rates for finite-dimensional kernels and subspaces}

\label{sec:finite}

If the kernel has finite rank (i.e., $t_j^2 = 0$ for $j$ sufficiently large), then it follows directly from Theorem
\ref{thm:UB} that 
$\risk(\estl) = O\parens{
    \braces{ \norm{\reg}_{\H}^2 + \s^2 } / n
  }$.
If the kernel has infinite rank, but $\reg $ is contained in the finite-dimensional subspace
$\H_J^{\circ}$ for some
$J < \infty$, then Theorem \ref{thm:UB}
can still be applied, provided $\linv$ has high qualification.  Indeed, if $\linv$
has infinite qualification and $\reg \in
\H_J^{\circ}$, then it follows that $\reg \in \H_{\zeta}$ for all $\zeta
\geq 0$ and Theorem \ref{thm:UB} implies that for any $0 < \a \leq 1$, 
  $\risk(\estl)
  =
  O\parens{
    \braces{ \norm{\reg}_{\H}^2 + \s^2 } / n^{1-\a}
  }$ for appropriately chosen $\l$.  
In fact, we can improve on this rate for KPCR.  
The next proposition implies that the risk of KPCR matches the
parametric rate $n^{-1}$ for $\reg \in \H_J^{\circ}$; the proof requires a different 
 argument, based on eigenvalue perturbation theory, which we give in
 Appendix~\ref{sec:proof-finite_adapt}.

\begin{proposition}\label{prop:finite_adapt}
  Let $\kpcr$ be the KPCR estimator, with principal component regularization
  \eqref{PCA}, and assume that $\reg \in \H_J^{\circ}$.
  Let $0 < r < 1$ be a constant and let $\l = (1-r)t_J^2$.
  If $rt_J^2 \geq \k^2/n^{1/2} + \k^2/(3n)$, then
  \begin{align*}
    \risk(\kpcr)
    & \ \leq \
    \frac{1}{n}\cdot
    \Braces{
      \frac{34\kappa^6}{r^2t_J^4}
      \norm{\reg}^2_{\H} + \frac{3\k^2}{(1-r)t_J^2}\s^2
    }
    + \k^2 \norm{\reg}{\H}^2
    \Braces{
      \frac{15\k^4}{n^2r^2t_J^4}
      + 4\exp\Parens{
        -\frac{nr^2t_J^4}{2\k^4 + 2\k^2rt_J^2/3}
      }
    }
    \,.
  \end{align*}
\end{proposition}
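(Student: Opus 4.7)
The plan is to decompose the error $\reg - \kpcr$ into bias and variance parts, bound the variance via a direct trace computation, and control the bias through a Davis-Kahan-type perturbation argument applied to the spectral projections of $T_X$ and $T := \E[T_X]$. Writing $y_i = \reg(\x_i) + \e_i$ with $\E[\e_i^2 \mid \x_i] \leq \s^2$, and letting $\bPi := s_\l(T_X) T_X$ denote the orthogonal projection (in $\H$) onto the span of eigenfunctions of $T_X$ with eigenvalues at least $\l$, one has $\kpcr = \bPi \reg + s_\l(T_X) S_X^* \ee$. Since $\E[\ee \mid X] = 0$, the cross term vanishes and
\[
  \risk(\kpcr)
  \ = \
  \E \norm{(I - \bPi) \reg}_{\marginal}^2
  +
  \E \norm{s_\l(T_X) S_X^* \ee}_{\marginal}^2.
\]

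For the variance I would use the identity $\norm{f}_{\marginal}^2 = \dotp{f, Tf}_{\H}$ for $f \in \H$ and compute the expectation conditional on $X$, obtaining
\[
  \E\brackets{\norm{s_\l(T_X) S_X^* \ee}_{\marginal}^2 \mid X} \ \leq \ \frac{\s^2}{n} \tr(T\, s_\l(T_X)^2 T_X) \ = \ \frac{\s^2}{n} \tr(T\, T_X^{-1} \bPi);
\]
since $T_X^{-1}\bPi$ is positive semidefinite with operator norm at most $1/\l$ while $\tr(T) \leq \k^2$, the inequality $\tr(AB) \leq \norm{B}_{\mathrm{op}} \tr(A)$ yields a variance bound of $\k^2\s^2/(n\l) = \k^2\s^2/(n(1-r)t_J^2)$, matching the variance term in the Proposition. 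For the bias, the assumption $\reg \in \H_J^\circ$ gives $\reg = P_J \reg$ with $P_J$ the spectral projection of $T$ onto its top-$J$ eigenspace, so
\[
  \norm{(I-\bPi)\reg}_{\marginal}^2
  \ = \
  \dotp{(I-\bPi) P_J\reg,\, T (I-\bPi) P_J\reg}_\H
  \ \leq \
  \k^2 \norm{\reg}_\H^2 \norm{(I-\bPi) P_J}_{\mathrm{op}}^2,
\]
using $\norm{T}_{\mathrm{op}} \leq \tr(T) \leq \k^2$. Since $I-\bPi$ and $P_J$ are spectral projections of $T_X$ and $T$ associated with the separated intervals $[0,\l)$ and $[t_J^2,\infty)$ (gap $r t_J^2$), a $\sin\Th$ theorem for Hilbert-Schmidt operators gives $\norm{(I - \bPi) P_J}_{\mathrm{op}} \leq \min\braces{1, \norm{T_X - T}_{\mathrm{op}}/(r t_J^2)}$.

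To close, I would split on the good event $\cE := \braces{\norm{T_X - T}_{\mathrm{op}} < r t_J^2}$. On $\cE$ the bias contributes at most $\k^2 \norm{\reg}_\H^2 \cdot \E[\norm{T_X - T}_{\mathrm{op}}^2 \ind{\cE}]/(r^2 t_J^4)$, and a Bernstein-type inequality for iid sums of Hilbert-Schmidt operators (developed in the appendix by adapting the results of Tropp and Minsker) produces a moment bound of the form $\E[\norm{T_X - T}_{\mathrm{op}}^2 \ind{\cE}] \leq C_1 \k^4/n + C_2 \k^4/n^2$, yielding the $34 \k^6 \norm{\reg}_\H^2 / (n r^2 t_J^4)$ and $15 \k^6 \norm{\reg}_\H^2 / (n^2 r^2 t_J^4)$ contributions with the appropriate constants. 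On $\cE^c$, combining the crude bound $\norm{(I-\bPi)\reg}_{\marginal}^2 \leq \k^2 \norm{\reg}_\H^2$ with the Bernstein tail estimate $\bbP(\cE^c) \leq 2 \exp(-n r^2 t_J^4 / (2 \k^4 + 2 \k^2 r t_J^2 / 3))$ accounts for the exponential term; the assumed lower bound $r t_J^2 \geq \k^2/n^{1/2} + \k^2/(3n)$ is precisely what ensures this exponent is meaningful and the good event has overwhelming probability. The main technical obstacle is invoking the Davis-Kahan $\sin\Th$ theorem and the Bernstein inequality in their Hilbert-Schmidt forms with tight enough constants to recover the specific numerical coefficients appearing in the Proposition.
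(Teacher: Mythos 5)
Your proposal is correct and takes essentially the same route as the paper's proof: the same bias--variance decomposition, the same Davis--Kahan-type bound $\norm{(I-\bPi)P_J}\le\norm{T_X-T}/(rt_J^2)$ for the bias combined with a split on a good event (the paper derives that bound from first principles and reaches the bad-event probability via Weyl's inequality applied to $\hat{t}_J^2$, which is equivalent to your event $\cE$), the same operator-Bernstein tail and second-moment lemmas for $\norm{T_X-T}$ (Lemmas~\ref{lem:minsker} and~\ref{lem:moment}, which give the tail factor $4$ rather than your claimed $2$ and the constants $34$ and $15$), and a variance bound of order $\k^2\s^2/(\l n)$ (the paper routes this through Lemma~\ref{lem:variance} with $r=0$ plus $\dl\le\k^2/\l$, hence its slightly larger constant $3$). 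The small discrepancies in constants all go in your favor, so they do not create a gap with respect to the stated bound.
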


Proposition~\ref{prop:finite_adapt} implies that KPCR may reach the parametric rate for estimating $\reg
\in\H_J^{\circ}$.  On the other hand, it is known that KRR may perform dramatically worse than KPCR in these
settings due
to the saturation effect \citep[see, e.g.,][]{caponnetto2007optimal,
  dicker2016ridge, dhillon2013risk}.

\section{Numerical experiments}\label{sec:experiments} 

\subsection{Simulated data}

This simulation study shows how KPCR is able to adapt to
highly structured signals, while KRR requires more favorable structure
from the ambient RKHS.  

For this experiment, we take $\cX = \{1,2,\ldots,2^{13}\}$.  The data distribution
$\rho$ on $\cY \times \cX$ is specified as follows.
The marginal distribution on $\cX$ is $\marginal(x) \propto x^{-1/2}$, the
regression function $\reg$ is given by $\reg(x) = \sum_{j=1}^5 \ind{x=j}$, and $\rho(\cdot\mid x)$
is normal with mean $\reg(x)$ and variance $1/4$.  To compute
$\hat{f}_{\l}$, we use the discrete kernel $K(x,\tilde x) = \ind{x = \tilde x}$

Using an iid~sample of size $n=2^{13}$, we compute $\estl$ (either KRR or KPCR) for $\l$ in a discrete
grid of $2^{10}$ values uniformly spaced between $10^{-5}$ and $0.02$, and then choose the value of
$\l$ for which $\estl$ has smallest \emph{validation mean-squared error}
$n^{-1} \sum_{i=1}^n (y_i^v-\estl(x_i^v))^2$,
computed using a separate iid~sample $\braces{ (x_i^v,y_i^v)
}_{i=1}^n$ of size $n=2^{13}$.  

\begin{figure}
  \begin{center}
    \begin{tabular}{cc}
      \includegraphics[width=0.47\textwidth]{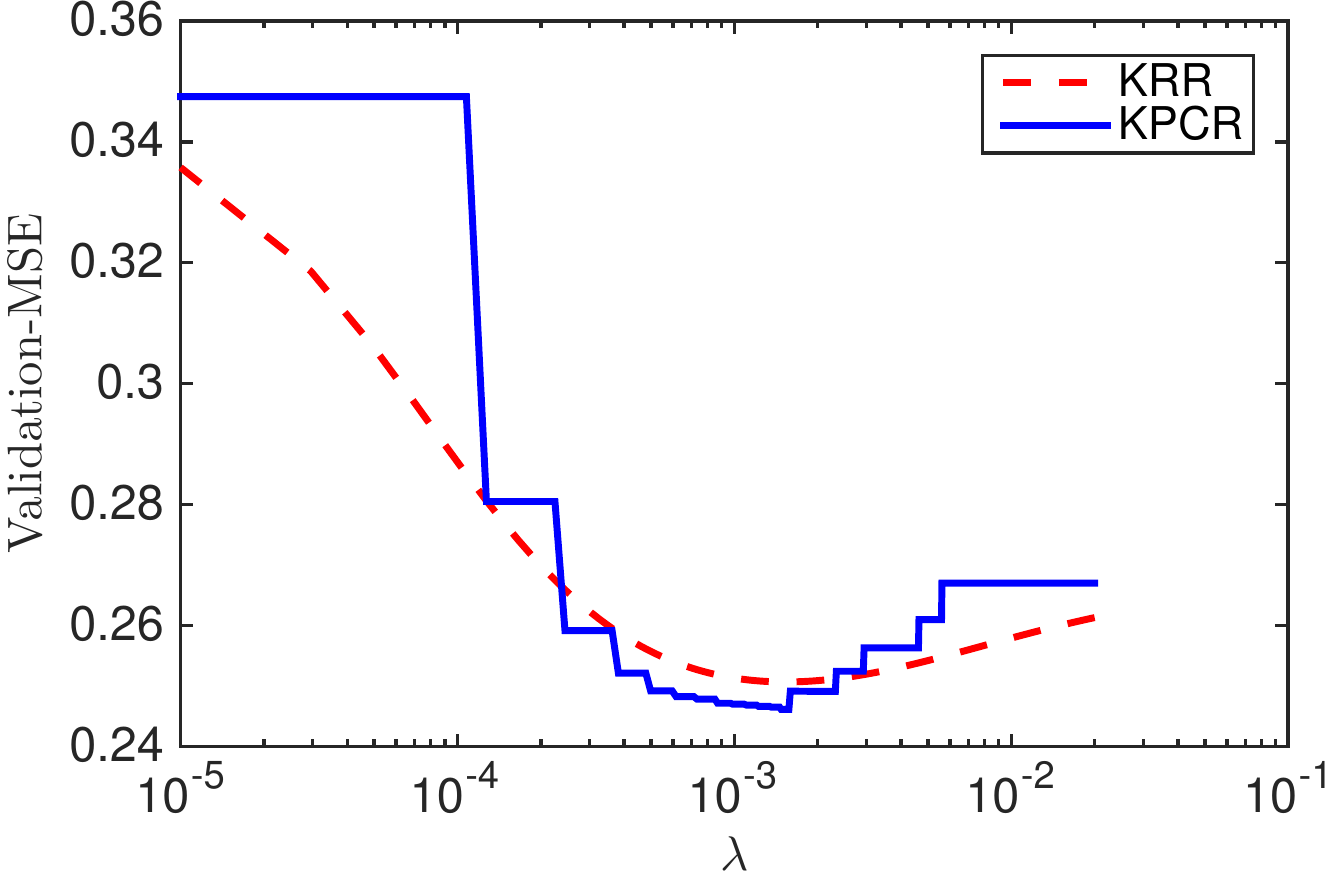} &
      \includegraphics[width=0.47\textwidth]{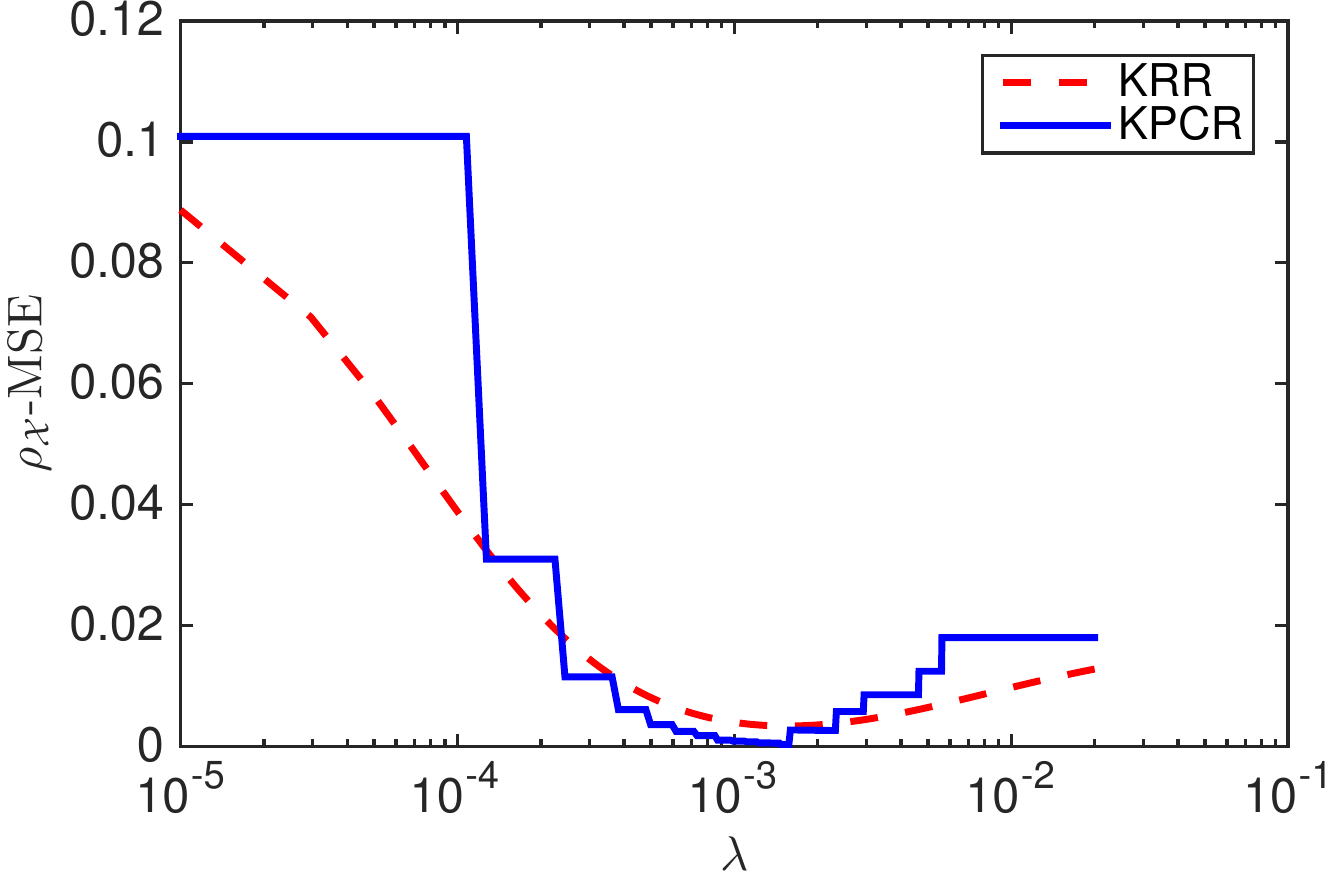}
      \\
      (a) & (b)
      \\
      \includegraphics[width=0.49\textwidth]{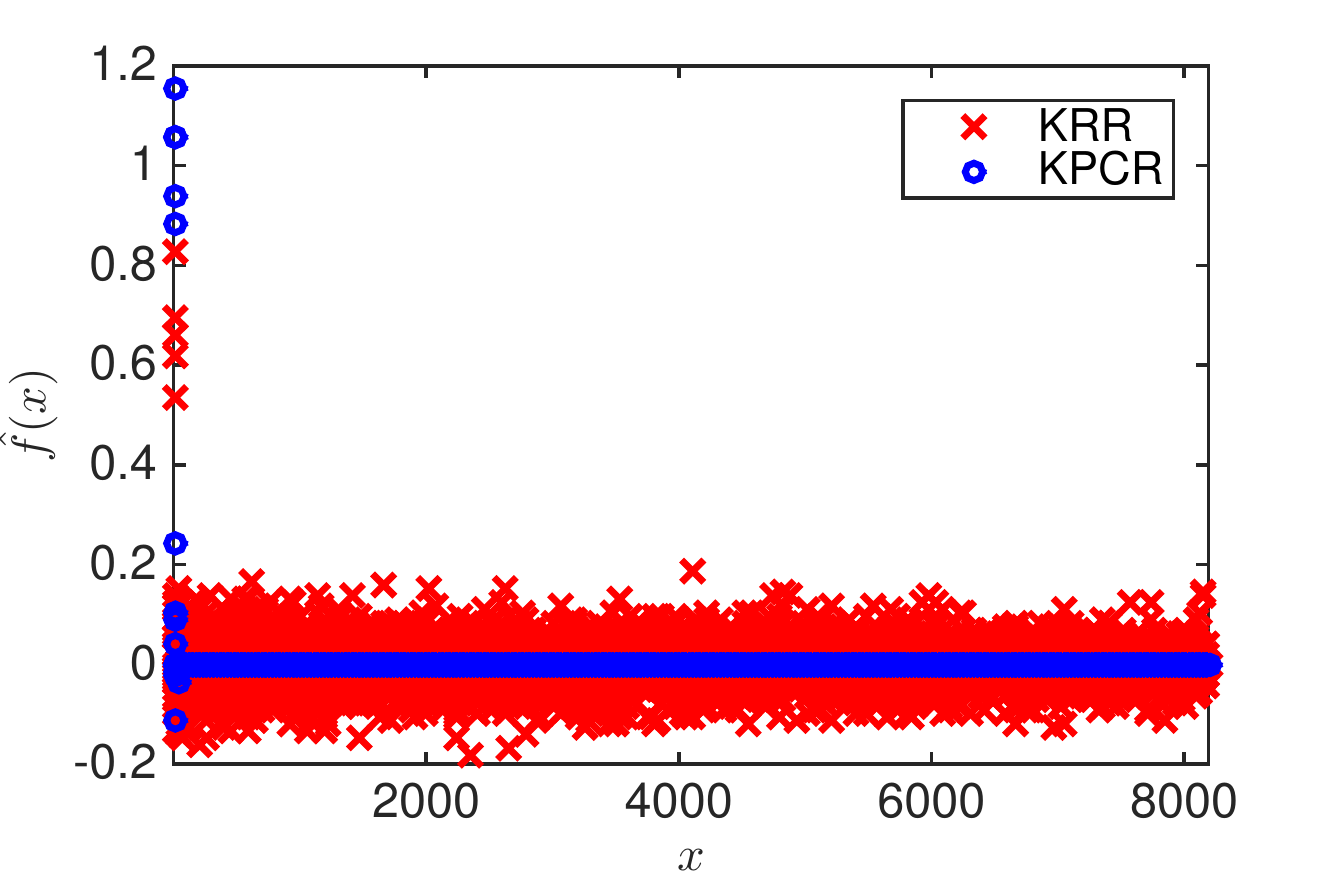} &
      \includegraphics[width=0.47\textwidth]{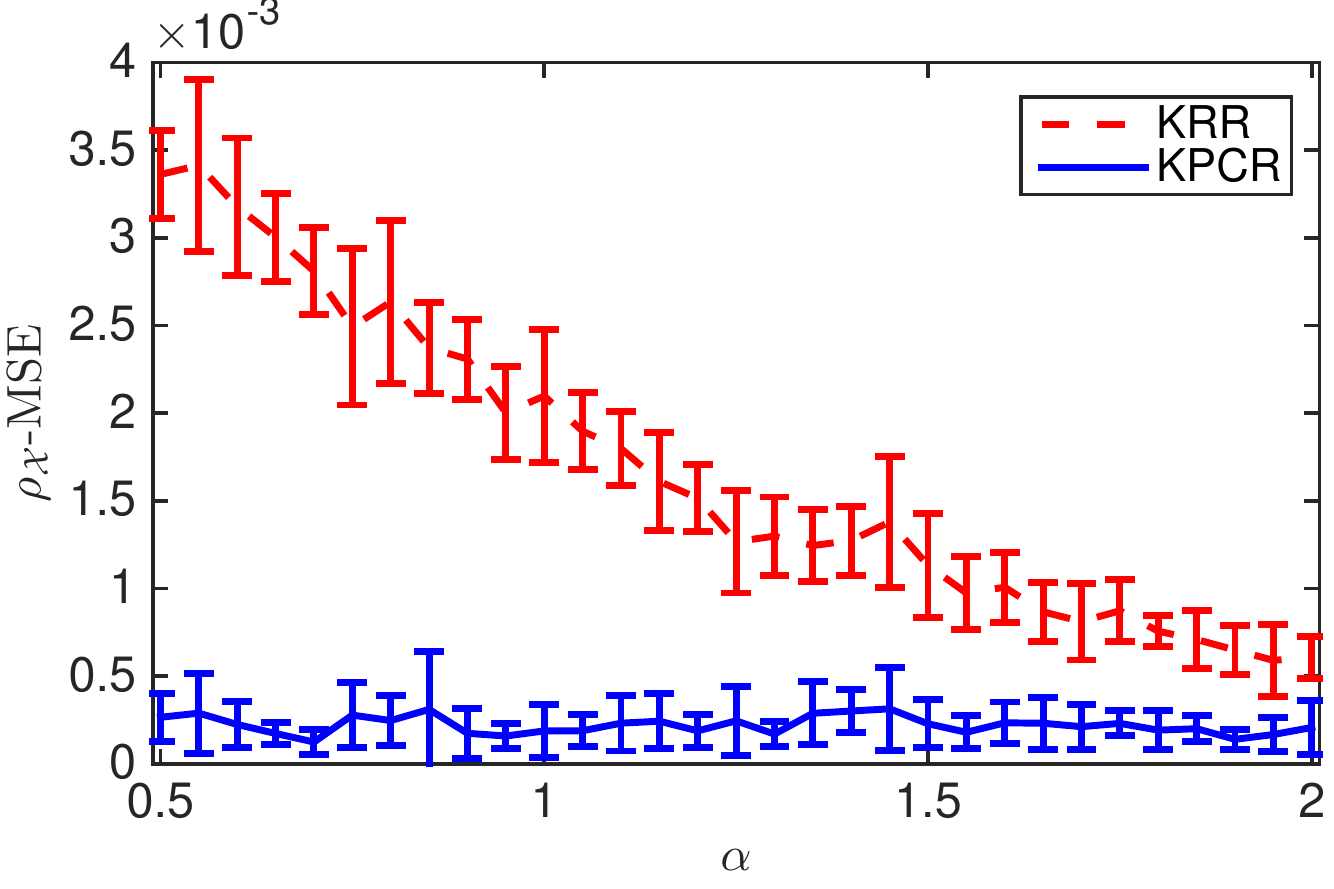}
      \\
      (c) & (d)
    \end{tabular}
  \end{center}
  \caption{\label{fig:mse}
    (a) Validation-MSE of $\krr$ and $\kpcr$
    for $\marginal(x) \propto x^{-1/2}$ as $\l$ varies;
    (b) $\marginal$-MSE of $\krr$ and $\kpcr$
    for $\marginal(x) \propto x^{-1/2}$ as $\l$ varies;
    (c) estimated functions $\est_{\KRR,\hat\l_{\KRR}}$ and
    $\est_{\KPCR,\hat\l_{\KPCR}}$ for $\marginal(x) \propto x^{-1/2}$;
    (d) $\marginal$-MSE of $\est_{\KRR,\hat\l_{\KRR}}$ and
    $\est_{\KPCR,\hat\l_{\KPCR}}$ for $\marginal(x) \propto x^{-\a}$ as $\a$
    varies.
  }
\end{figure}

Figure~\ref{fig:mse}(a) shows the validation-MSE of each $\estl$; the plot of \emph{$\marginal$-MSE}
$\norm{\reg-\estl}_{L^2(\marginal)}^2$ has roughly the same shape, just shifted down by $\s^2=1/4$
(Figure~\ref{fig:mse}(b)).
The selected $\l$ is $\hat\l_{\KRR} = 0.001534$ for KRR, and $\hat\l_{\KPCR} = 0.001573$ for KPCR.
These choices of $\l$ yield the final estimators, $\est_{\KRR,\hat\l_{\KRR}}$ and
$\est_{\KPCR,\hat\l_{\KPCR}}$; the $\marginal$-MSE is $0.0034$ for KRR, and $0.0003$ for KPCR.
In Figure~\ref{fig:mse}(c),
we plot the functions $\est_{\KRR,\hat\l_{\KRR}}$ and
$\est_{\KPCR,\hat\l_{\KPCR}}$; the KRR function is non-zero for much of the domain, while the KPCR
function is zero for nearly all of the domain (like $\reg$).

We repeat the above simulation for different marginal distributions $\marginal(x) \propto x^{-\a}$,
for $1/2 \leq \a \leq 2$, which imply different eigenvalue sequences $\{t_j^2\}$.
The mean and standard deviation of the $\marginal$-MSE's over $10$ repetitions are shown in
Figure~\ref{fig:mse}(b).
This confirms KPCR's to adapt to the regularity of $\reg$, regardless of the ambient RKHS;
KRR requires more structure to achieve similar results.

\subsection{Real data}

We also compared KRR and KPCR using three ``weighted degree'' kernels designed for recognizing
splice sites in genetic sequences~\citep{sonnenburg08machine}.\footnote{We use the first three
kernels for the data obtained from \url{http://mldata.org/repository/data/viewslug/mkl-splice/}.}
The $3300$ samples are divided into a training set ($1000$), validation set ($1100$), and testing set
($1200$).
For each kernel, we use the training data to compute $\estl$ for $\l$ in a discrete grid of $2^{10}$
equally-spaced values between $10^{-5}$ and $0.4$, and select the value of $\l$ on which the MSE of
$\estl$ on the validation set is smallest.
The MSE on the testing set and the intrinsic dimension $d_{\hat\lambda}$ for the selected
$\hat\lambda$ (on the training data) are as follows:
\begin{center}
  \begin{tabular}{|c|c|c|c|}
    \hline
    & Kernel 1 & Kernel 2 & Kernel 3 \\
    \hline
    MSE of $\est_{\KRR,\hat\l_{\KRR}}$
    & \textbf{0.181452}
    & 0.172223
    & 0.167745 \\
    \hline
    MSE of $\est_{\KPCR,\hat\l_{\KPCR}}$
    & 0.187059
    & \textbf{0.168067}
    & \textbf{0.164159} \\
    \hline
    $d_{\hat\l_{\KRR}}$
    & 175.6879
    & 373.0029
    & 738.0712 \\
    \hline
    $d_{\hat\l_{\KPCR}}$
    & 170.9016
    & 275.3560
    & 581.1381 \\
    \hline
  \end{tabular}
\end{center}
KRR outperforms KPCR with Kernel 1, where the intrinsic dimension of the kernel is low, while the
reverse happens with Kernels 2 and 3, where the intrinsic dimensions
are high.  This resonates with our theoretical results, which suggest that
KRR requires low intrinsic  dimension to perform most effectively.  

\section{Discussion}

Our unified analysis for a general class of regularization families in
nonparametric regression highlights two important statistical
properties.
First, the results show minimax optimality for this general class in
several commonly studied settings, which was only previously
established for specific regularization methods.
Second, the results demonstrate the adaptivity of certain
regularization families to subspaces of the RKHS, showing that
these techniques may take advantage of additional smoothness
properties that the signal may possess.
It is notable that the most well-studied family, KRR/Tikhonov
regularization, does not possess this adaptability property.

\bibliographystyle{plainnat}
\bibliography{paper}

\newpage

\appendix

\section{Proof of Theorem~\ref{thm:UB}}
\label{sec:proof-UB}

To provide some intuition behind $\estl$ and our proof strategy, define the positive self-adjoint
operator
\begin{equation*}
  T \colon \H \to \H
\end{equation*}
by
\begin{align*}
  T\phi
  & \ = \
  \int_{\cX} \dotp{ \phi,K_{\x} }_{\H} K_{\x} \ \dif\marginal(\x)
  \ = \
  \sum_{j=1}^{\infty} t_j^2 \dotp{ \phi,\phi_j }_{\H}\phi_j
  \,, \quad \phi \in \H
  \,.
\end{align*}
Observe that $T$ is a ``population'' version of the operator $T_X$.
Unlike $T_X$, $T$ often has infinite rank; however, we still might expect that 
\begin{equation*}
  T
  \ \approx \
  T_X
\end{equation*}
for large $n$, where the approximation holds in some suitable sense.

We also have a large-$n$ approximation for $S_X^*\y$.
For $\phi \in \H$,
\begin{align*}
  \dotp{ \phi,S_X^*\y }_{\H}
  & \ = \ \frac{1}{n} \sum_{i = 1}^n y_i \dotp{ \phi,K_{\x_i} }_{\H}
  \\
  & \ \approx \ \int_{\cY \times \cX} y \phi(\x) d\rho(y,\x)
  \ = \ \int_{\cX} \reg(\x)\phi(\x) \dif\marginal(\x)
  \ = \ \dotp{ \phi,\reg }_{L^2(\marginal)}
  \ = \ \dotp{ \phi,T\reg }_{\H}
  \,,
\end{align*}
where $\dotp{ \cdot,\cdot }_{L^2(\marginal)}$ denotes the inner-product on $L^2(\marginal)$ and we
have used the identity $\phi_j = t_j\psi_j$ to obtain the last equality.
It follows that $S_X^*\y \approx T\reg$.
Hence, to recover $\reg$ from $\y$, it would be natural to apply the inverse of $T$ to $S_X^*\y$.
However, $T$ is not invertible whenever it has infinite rank, and regularization becomes necessary.
We thus arrive at the chain of approximations which help motivate $\estl$:
\[
  \estl
  \ = \
  \linv(T_X)S_X^*\y
  \ \approx \ \linv(T)T\reg
  \ \approx \ \reg
  \,,
\]
where $\linv(T)$ may be viewed as an approximate inverse for a suitably chosen regularization
parameter $\l$.

\subsection{Bias-variance decomposition}

The proof of Theorem~\ref{thm:UB} is based on a simple bias-variance
decomposition of the risk of $\estl$.  Let $\ee = (y_1-\reg(\x_1),\dotsc,y_n-\reg(\x_n))^\top \in \R^n$.
\begin{proposition}\label{prop:bv}
 The risk $\risk(\estl)$ has the decomposition
  \begin{equation}
    \risk(\estl)
    \ = \
    \bias(\estl) + \variance(\estl)
    \,,
  \end{equation}
  where
  $\bias(\estl)$ and $\variance(\estl)$ are defined as
  \begin{align*}
    \bias(\estl)
    & \ = \ \E\Brackets{ \Norm{ T^{1/2}\braces{ I - \linv(T_X)T_X
    }\reg }_{\H}^2 } \,, &
    \variance(\estl)
    & \ = \
    \E\Brackets{ \Norm{ T^{1/2}\linv(T_X)S_X^*\ee }_{\H}^2 }
    \,.
  \end{align*}
\end{proposition}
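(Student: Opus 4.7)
My plan is to first establish the isometric identity $\|f\|_{\marginal}^2 = \|T^{1/2} f\|_\H^2$ for every $f \in \H$, then perform an algebraic decomposition of $\reg - \estl$ into a bias part and a variance (noise) part, and finally expand the square and show the cross term vanishes by conditioning on $X$.

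For the identity, I would expand $f \in \H$ in two complementary ways. On the one hand, $f$ lies in $L^2(\marginal)$, so $f = \sum_j \th_j \psi_j$ with $\{\psi_j\}$ orthonormal in $L^2(\marginal)$, giving $\|f\|_{\marginal}^2 = \sum_j \th_j^2$. On the other hand, since $\phi_j = t_j \psi_j$ is an orthonormal basis of $\H$, the computation $T\phi_j = t_j^2 \phi_j$ (read off directly from the Mercer expansion \eqref{mercer} together with $T\phi = \int \dotp{\phi,K_\x}_{\H} K_\x \dif\marginal(\x)$) yields $T^{1/2}\psi_j = t_j \psi_j$, so $T^{1/2} f = \sum_j \th_j \phi_j$ and $\|T^{1/2} f\|_\H^2 = \sum_j \th_j^2$. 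This gives the identity. Because $\reg \in \H$ by \eqref{reg} and $\estl = \linv(T_X) S_X^*\y$ lies in the finite-dimensional span of $\{K_{\x_i}\}_{i=1}^n \subseteq \H$, the difference $\reg - \estl$ is in $\H$, so the identity applies and $\risk(\estl) = \E\bigl[\|T^{1/2}(\reg-\estl)\|_\H^2\bigr]$.

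Next, I write $\y = S_X \reg + \ee$, which is valid coordinate-wise since $\ee_i = y_i - \reg(\x_i)$ and $(S_X \reg)_i = \reg(\x_i)$. Using $T_X = S_X^* S_X$, I obtain
\begin{align*}
  \estl \ = \ \linv(T_X) S_X^* S_X \reg + \linv(T_X) S_X^* \ee
  \ = \ \linv(T_X) T_X \reg + \linv(T_X) S_X^* \ee,
\end{align*}
and therefore $\reg - \estl = \{I - \linv(T_X) T_X\}\reg - \linv(T_X) S_X^* \ee$. Applying $T^{1/2}$ and squaring the $\H$-norm gives three terms: the bias term $\|T^{1/2}\{I - \linv(T_X) T_X\}\reg\|_\H^2$, the variance term $\|T^{1/2}\linv(T_X) S_X^* \ee\|_\H^2$, and a cross term $-2\dotp{T^{1/2}\{I - \linv(T_X) T_X\}\reg,\, T^{1/2}\linv(T_X) S_X^* \ee}_\H$.

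The one remaining step is to kill the cross term, which is where I would condition on $X$. Given $X$, the operators $T_X$, $S_X^*$, and $\linv(T_X)$ are deterministic, so the cross term equals $\dotp{A_X \reg,\, B_X \E[\ee \mid X]}_\H$ for some $X$-measurable operators $A_X, B_X$. Since $\E[\ee_i \mid X] = \E[y_i - \reg(\x_i) \mid \x_i] = 0$ by the definition of $\reg$ and the independence of the samples, $\E[\ee \mid X] = 0$ and the cross term vanishes in expectation. The main obstacle, such as it is, is not deep—it is just making sure the Hilbert-space-valued conditional expectation argument is carried out carefully (for instance, by expanding $S_X^* \ee = n^{-1}\sum_i \ee_i K_{\x_i}$ and taking conditional expectation component by component), but no serious technical machinery is required beyond the basic isometric identity.
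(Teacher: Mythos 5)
Your proposal is correct and is essentially the argument the paper relies on (the paper states Proposition~\ref{prop:bv} without writing out a proof, but its subsequent translation to the $\ell^2(\N)$ notation presumes exactly this reasoning): the isometry $\norm{f}_{\marginal}^2 = \norm{T^{1/2}f}_{\H}^2$ for $f \in \H$, the split $\y = S_X\reg + \ee$ together with $T_X = S_X^*S_X$, and the vanishing of the cross term via $\E[\ee \mid X] = \mathbf{0}$. No gaps; the only cosmetic slip is writing $T^{1/2}\psi_j = t_j\psi_j$, which should be read with the understanding that $\psi_j \in \H$ only when $t_j^2 > 0$ (harmless, since any $f\in\H$ has zero coefficients on the null eigenfunctions).
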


Our proof separately bounds the bias $\bias(\estl)$ and variance $\variance(\estl)$ terms from
Proposition~\ref{prop:bv}.
Taken together, these bounds imply a bound on $\risk(\estl)$.

\subsection{Translation to vector and matrix notation}
\label{sec:ell2-notations}

We first note that the Hilbert space $\H$ is isometric to $\ell^2(\N)$ via the isometric isomorphism
$\iota \colon \H \to \ell^2(\N)$, given by
\begin{equation}\label{isometry}
  \iota \colon \sum_{j= 1}^{\infty} \a_j \phi_j
  \ \mapsto \
  (\a_1,\a_2,\dotsc)^{\top}
\end{equation}
(we take all elements of $\ell^2(\N)$ to be infinite-dimensional column vectors).
Using this equivalence, we can convert elements of $\H$ and linear operators on $\H$ appearing in
Proposition~\ref{prop:bv} into (infinite-dimensional) vectors and matrices, respectively, which we
find simpler to analyze in the sequel. 

Define the (infinite-dimensional) diagonal matrix
\[
  \T \ = \ \diag(t_1^2,t_2^2,\dotsc)
  \,,
\]
and the vector
\[
  \bb \ = \ (\b_1,\b_2,\dotsc)^{\top}
  \ \in \ \ell^2(\N)
  \,,
\]
where $\b_i = \dotp{ \reg, \phi_i }_{\H}$.
Next, define the $n \times \infty$ (random) matrices
\begin{align*}
  \bPsi
  & \ = \ (\psi_j(\x_i))_{1 \leq i \leq n; 1 \leq j < \infty}
  \,,
  \\
  \bPhi
  & \ = \ \bPsi\T \ = \ (\phi_j(\x_i))_{1 \leq i \leq n; 1 \leq j < \infty}
  \,.
\end{align*}
Observe that
\begin{align*}
  S_X & \ = \ \bPhi \circ \iota \,,
  \\
  S_X^* & \ = \ \iota^{-1} \circ \Parens{\frac{1}{n}\bPhi^{\top}} \,,
  \\
  T & \ = \ \iota^{-1} \circ \T \circ \iota \,.
\end{align*}
Also, for $1 \leq i \leq n$, let $\bphi_i\bphi_i^\top$ be the $\infty \times \infty$ matrix whose
$(j,j')$-th entry is $\phi_j(\x_i)\phi_{j'}(\x_i)$, and define
\begin{align*}
  \S
  & \ = \
  \frac1n \sum_{i=1}^n \bphi_i\bphi_i^\top
  \ = \
  \frac1n \bPhi^{\top}\bPhi
  \,.
\end{align*}
Finally, let $\I = \diag(1,1,\dotsc)$, and let $\norm{\cdot} = \norm{\cdot}_{\ell^2(\N)}$ denote the
norm on $\ell^2(\N)$.  
In these matrix and vector notations, the bias-variance decomposition from Proposition~\ref{prop:bv}
translates to the following:
\begin{align*}
  \bias(\estl) & \ = \ \E\Brackets{ \Norm{ \T^{1/2}\braces{\I - \linv(\S)\S}\bb }^2 }
  \,, \\
  \variance(\estl) & \ = \ \frac{1}{n^2}\E\Brackets{ \Norm{ \T^{1/2}\linv(\S)\bPhi^{\top}\ee }^2 }
  \,.
\end{align*}
The boundedness of the kernel implies
\begin{align*}
  \tr(\T) & \ \leq \ \k^2 \,, \\
  \norm{\bphi_i\bphi_i^\top} & \ \leq \ \k^2 \,, \\
  \norm{\S} & \ \leq \ \k^2 \,,
\end{align*}
where the norms are the operator norms in $\ell^2(\N)$.

\subsection{Probabilistic bounds}

For $0 < r < 1$, define the event
\begin{equation}
  \cA_r
  \ = \
  \Biggl\{
    \Norm{
      (\T+\l\I)^{-1/2} (\S-\T) (\T+\l\I)^{-1/2}
    }
    \ \geq \
    r
  \Biggr\}
  \,,
\end{equation}
and let $\cA_r^c$ denote its complement.
Our bounds on bias $\bias(\estl)$ and variance $\variance(\estl)$ are based on analysis in the event
$\cA_r^c$ (for a constant $r$).
Therefore, we also need to show that $\cA_r^c$ has large probability (equivalently, show that
$\cA_r$ has small probability).

\begin{lemma}
  \label{lem:tail}
  Assume that
  \begin{align}
    \sup_{\x \in \cX}
    \sum_{j=1}^\infty t_j^{2(1-\d)} \psi_j(\x)^2
    \ \leq \
    \k_{\d}^2
    \ < \
    \infty
  \end{align}
  for some $0 \leq \d < 1$, $\k_{\d}^2 >0$.
  Further assume that $\l^{1-\d} \leq \k_{\d}^2$.
  If $r \geq \sqrt{\k_{\d}^2/(\l^{1-\d}n)} + \k_{\d}^2/(3\l^{1-\d}n)$, then
  \begin{align*}
    P(\cA_r)
    & \ \leq \
    4\dl
    \exp\Parens{
      -\frac{\l^{1-\d}nr^2}{2\k_{\d}^2(1+r/3)}
    }
    \,.
  \end{align*}
\end{lemma}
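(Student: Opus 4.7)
The plan is to interpret the event $\cA_r$ as the tail event of an i.i.d.\ sum of centered self-adjoint trace-class operators on $\ell^2(\N)$, and to apply the intrinsic-dimension version of the operator Bernstein inequality that the appendix develops (extending results of Tropp and Minsker). Writing $\S-\T = \frac{1}{n}\sum_{i=1}^n(\bphi_i\bphi_i^{\top} - \T)$ and defining
\[
  Z_i\ :=\ (\T+\l\I)^{-1/2}\bphi_i\bphi_i^{\top}(\T+\l\I)^{-1/2}-(\T+\l\I)^{-1/2}\T(\T+\l\I)^{-1/2},
\]
one has $\cA_r = \{\|\sum_{i=1}^n Z_i\|\ge nr\}$ with $Z_1,\dotsc,Z_n$ i.i.d., mean-zero, and self-adjoint, since $\E[\bphi_i\bphi_i^{\top}]=\T$. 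What remains is to supply (i) an almost-sure bound $U$ on $\|Z_i\|$, (ii) a bound $\sigma^2$ on $\|\sum_i \E Z_i^2\|$, and (iii) an intrinsic-dimension parameter, then to invoke the Bernstein inequality.

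Both (i) and (ii) exploit the rank-one structure of $\bphi_i\bphi_i^{\top}$. For (i), $\|(\T+\l\I)^{-1/2}\bphi_i\bphi_i^{\top}(\T+\l\I)^{-1/2}\| = \bphi_i^{\top}(\T+\l\I)^{-1}\bphi_i = \sum_{j}t_j^2\psi_j(\x_i)^2/(t_j^2+\l)$. Young's inequality $t_j^{2\d}\l^{1-\d}\le \d t_j^2+(1-\d)\l\le t_j^2+\l$ yields $t_j^2/(t_j^2+\l)\le t_j^{2(1-\d)}/\l^{1-\d}$, so combining with assumption~\eqref{psibd} gives $\bphi_i^{\top}(\T+\l\I)^{-1}\bphi_i\le \k_{\d}^2/\l^{1-\d}$. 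Together with $\|(\T+\l\I)^{-1/2}\T(\T+\l\I)^{-1/2}\|\le 1\le \k_{\d}^2/\l^{1-\d}$ (using $\l^{1-\d}\le \k_{\d}^2$), this gives $\|Z_i\|\le U:=\k_{\d}^2/\l^{1-\d}$. For (ii), the identity $\bphi_i\bphi_i^{\top}A\bphi_i\bphi_i^{\top}=(\bphi_i^{\top}A\bphi_i)\bphi_i\bphi_i^{\top}$ with $A=(\T+\l\I)^{-1}$ gives $\E Z_i^2\preceq (\k_{\d}^2/\l^{1-\d})(\T+\l\I)^{-1/2}\T(\T+\l\I)^{-1/2}$, so $\sigma^2:=n\k_{\d}^2/\l^{1-\d}$ bounds $\|\sum_i\E Z_i^2\|$. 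For (iii), the trace of the PSD dominant $(\T+\l\I)^{-1/2}\T(\T+\l\I)^{-1/2}$ equals $\sum_j t_j^2/(t_j^2+\l)=\dl$, so the effective dimension serves as the intrinsic-dimension parameter when paired with $\sigma^2$ and $U$.

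Invoking the intrinsic-dimension operator Bernstein inequality with $t=nr$, $U$, $\sigma^2$, and intrinsic dimension $\dl$ yields
\[
  P(\cA_r)\ \le\ 4\dl\exp\Parens{-\frac{(nr)^2/2}{n\k_{\d}^2/\l^{1-\d}+(\k_{\d}^2/\l^{1-\d})(nr)/3}}\ =\ 4\dl\exp\Parens{-\frac{\l^{1-\d}nr^2}{2\k_{\d}^2(1+r/3)}},
\]
which is exactly the stated bound, and the applicability threshold $t\ge \sqrt{\sigma^2}+U/3$ translates verbatim to $r\ge \sqrt{\k_{\d}^2/(\l^{1-\d}n)}+\k_{\d}^2/(3\l^{1-\d}n)$, matching the hypothesis. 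The main obstacle is really calibrating the intrinsic-dimension form of Bernstein so that the prefactor is exactly $4\dl$—arising from the \emph{trace of a PSD dominant} of the second-moment operator rather than the intrinsic dimension of $\sum_i \E Z_i^2$ itself—and ensuring validity for trace-class operators on an infinite-dimensional Hilbert space; this is precisely what the appendix's extensions of Tropp's and Minsker's inequalities are designed to provide.
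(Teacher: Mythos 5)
Your proposal is correct and follows essentially the same route as the paper: whiten by $(\T+\l\I)^{-1/2}$, bound the operator norm of each summand by $\k_{\d}^2/\l^{1-\d}$ via the weighted AM--GM (your Young's inequality) step together with $\l^{1-\d}\le\k_{\d}^2$, use the rank-one identity to dominate the second moment by $(\k_{\d}^2/\l^{1-\d})\,(\T+\l\I)^{-1/2}\T(\T+\l\I)^{-1/2}$, and invoke the intrinsic-dimension operator Bernstein inequality (the paper's Lemma~\ref{lem:minsker}, which is stated exactly in the form you need, with $\tr(\E[\Y^2])\le VD$ giving prefactor $4D=4\dl$ and threshold $V^{1/2}+R/3$). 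The only difference is your choice not to carry the $1/n$ normalization inside the summands and to apply the tail bound at $t=nr$, which is an equivalent rescaling of the paper's choice $V=R=\k_{\d}^2/(\l^{1-\d}n)$, $D=\dl$.
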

\begin{proof}
  The proof is an application of Lemma~\ref{lem:minsker}.
  Define, for $1 \leq i \leq n$,
  \[
    \X_i
    \ = \
    \frac1n
    \parens{ \T + \l\I }^{-1/2}
    \braces{
      \bphi_i\bphi_i^\top - \T
    }
    \parens{ \T + \l\I }^{-1/2}
    \,,
  \]
  as well as $\Y = \sum_{i=1}^n \X_i$.
  We have
  \[
    \Y \ = \ (\T+\l\I)^{-1/2} (\S-\T) (\T+\l\I)^{-1/2}
    \,.
  \]
  It is clear that $\E\brackets{\X_i} = \mathbf{0}$.
  Observe that
  \begin{align}
    \sum_{j=1}^\infty \frac{t_j^2}{t_j^2+\l} \psi_j(\x_i)^2
    & \ \leq \
    \max_{j\geq1} \frac{t_j^{2\d}}{t_j^2+\l}
    \sum_{j=1}^\infty t_j^{2(1-\d)} \psi_j(\x_i)^2
    \ \leq \
    \k_{\d}^2
    \max_{j\geq1} \frac{t_j^{2\d}}{t_j^2+\l}
    \ \leq \
    \frac{\k_{\d}^2}{\l^{1-\d}}
    \,,
    \label{eq:length}
  \end{align}
  where the last inequality uses the inequality of arithmetic and geometric means.
  Therefore, by the assumption $\l^{1-\d} \leq \k_{\d}^2$,
  \begin{align*}
    \norm{\X_i}
    & \ \leq \
    \frac1n
    \max\Braces{
      \norm{
        \parens{ \T + \l\I }^{-1/2} \bphi_i\bphi_i^\top \parens{ \T + \l\I }^{-1/2}
      }
      \,,\,
      \norm{
        \parens{ \T + \l\I }^{-1/2} \T \parens{ \T + \l\I }^{-1/2}
      }
    }
    \\
    & \ = \
    \frac1n
    \max\Braces{
      \sum_{j=1}^\infty \frac{t_j^2}{t_j^2+\l} \psi_j(\x_i)^2
      \,,\,
      \max_{j\geq1} \frac{t_j^2}{t_j^2+\l}
    }
    \ \leq \
    \frac{\k_{\d}^2}{\l^{1-\d}n}
    \,.
  \end{align*}
  Moreover,
  \begin{align*}
    \E\brackets{\X_i^2}
    & \ = \
    \frac1{n^2}
    \E\Brackets{
      \parens{ \T + \l\I }^{-1/2}
      \bphi_i\bphi_i^\top
      \parens{ \T + \l\I }^{-1}
      \bphi_i\bphi_i^\top
      \parens{ \T + \l\I }^{-1/2}
      - \parens{ \T + \l\I }^{-2} \T^2
    }
    \\
    & \ = \
    \frac1{n^2}
    \E\Brackets{
      \Braces{
        \sum_{j=1}^\infty \frac{t_j^2}{t_j^2+\l} \psi_j(\x_i)^2
      }
      \parens{ \T + \l\I }^{-1/2}
      \bphi_i\bphi_i^\top
      \parens{ \T + \l\I }^{-1/2}
      - \parens{ \T + \l\I }^{-2} \T^2
    }
    \,.
  \end{align*}
  Combining this with~\eqref{eq:length} gives
  \begin{align*}
    \norm{\E\brackets{\Y^2}}
    & \ \leq \
    \frac{\k_{\d}^2}{\l^{1-\d}n}
    \Norm{
      \E\Brackets{
        \parens{ \T + \l\I }^{-1/2}
        \bphi_1\bphi_1^\top
        \parens{ \T + \l\I }^{-1/2}
      }
    }
    \\
    & \ = \
    \frac{\k_{\d}^2}{\l^{1-\d}n}
    \norm{ \parens{ \T + \l\I }^{-1} \T }
    \ \leq \
    \frac{\k_{\d}^2}{\l^{1-\d}n}
  \end{align*}
  and
  \begin{align*}
    \tr\Parens{\E\brackets{\Y^2}}
    & \ \leq \
    \frac{\k_{\d}^2}{\l^{1-\d}n}
    \tr\Parens{
      \E\Brackets{
        \parens{ \T + \l\I }^{-1/2}
        \bphi_1\bphi_1^\top
        \parens{ \T + \l\I }^{-1/2}
      }
    }
    \\
    & \ = \
    \frac{\k_{\d}^2}{\l^{1-\d}n}
    \tr\Parens{ \parens{ \T + \l\I }^{-1} \T }
    \ = \
    \frac{\dl\k_{\d}^2}{\l^{1-\d}n}
    \,.
  \end{align*}
  Applying Lemma~\ref{lem:minsker} with $V = R = \k_{\d}^2 / (\l^{1-\d}n)$ and
  $D = \dl$, gives
  \[
    P\Parens{ \norm{\Y} \geq r }
    \ \leq \
    4\dl\exp\Parens{
      -\frac{\l^{1-\d}nr^2}{2\k_{\d}^2(1+r/3)}
    }
    \,.
    \qedhere
  \]
\end{proof}

\begin{lemma}
  \label{lem:moment}
  \[
    \E\Brackets{ \norm{\S-\T}^2 } \ \leq \
    \frac{34\k^4}{n} + \frac{15\k^4}{n^2}
    \,.
  \]
\end{lemma}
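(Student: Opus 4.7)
The plan is to decompose $\S - \T$ as an iid sum of self-adjoint centered operators on $\ell^2(\N)$, apply the Hilbert-space Bernstein-type inequality of Lemma~\ref{lem:minsker}, and integrate the resulting tail estimate to obtain a bound on $\E\|\S - \T\|^2$. Set $\X_i := n^{-1}(\bphi_i\bphi_i^\top - \T)$ so that $\S - \T = \sum_{i=1}^n \X_i$, with the $\X_i$ iid, self-adjoint, and mean-zero.

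The three parameters required by Lemma~\ref{lem:minsker} are computed as follows. Since $\|\bphi_i\bphi_i^\top\| = K(\x_i,\x_i) \leq \k^2$ and $\|\T\| \leq \k^2$ (both operators being positive semidefinite), $\|\X_i\| \leq R := \k^2/n$. Expanding $\E[\X_i^2] = n^{-2}\Parens{\E[K(\x_i,\x_i)\bphi_i\bphi_i^\top] - \T^2}$ and using the operator inequality $\E[K(\x_i,\x_i)\bphi_i\bphi_i^\top] \preceq \k^2\T$, the summed variance is bounded by $V := \|\E[(\S-\T)^2]\| \leq \k^4/n$; the same computation combined with $\tr(\T) \leq \k^2$ yields the trace bound $\tr(\E[(\S-\T)^2]) \leq \k^4/n$. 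Consequently, the intrinsic dimension $D := \tr(\E[(\S-\T)^2])/\|\E[(\S-\T)^2]\|$ appearing in Lemma~\ref{lem:minsker} satisfies $DV \leq \k^4/n$.

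Lemma~\ref{lem:minsker} then furnishes the tail bound $P(\|\S-\T\| > t) \leq 4D\exp\Parens{-t^2/(2V + 2Rt/3)}$, and I would convert this into a moment bound via $\E\|\S-\T\|^2 = \int_0^\infty 2t\,P(\|\S-\T\|>t)\,dt$, splitting at a threshold $t_0$ at which the Bernstein tail becomes informative (roughly $t_0^2 \sim V\log(4D) + R^2\log^2(4D)$). On $[0,t_0]$ the trivial bound $P \leq 1$ yields the contribution $t_0^2$; on $(t_0,\infty)$ the Bernstein tail contributes terms of order $DV$ from its sub-Gaussian piece and $DR^2\log(4D)$ from its sub-exponential piece. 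Assembling these and absorbing the $D$ factors via the identity $DV \leq \k^4/n$ gives a final bound of the form $c_1 \k^4/n + c_2 \k^4/n^2$, with the specific constants $34$ and $15$ emerging from careful tracking of the numerical factors.

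The main obstacle is that the intrinsic dimension $D$ is not a priori bounded by an absolute constant, so the factor of $4D$ in front of the exponential cannot simply be discarded. The key is the identity $DV = \tr(\E[(\S-\T)^2]) \leq \k^4/n$, which lets $D$ be absorbed multiplicatively with $V$ and $R^2$ during the tail integration. As a sanity check, a much simpler alternative route is Hilbert--Schmidt domination $\|\S-\T\|^2 \leq \tr((\S-\T)^2)$, which gives $\E\|\S-\T\|^2 \leq \tr(\E[(\S-\T)^2]) \leq \k^4/n$ in one line with the tighter constant $1$; the Bernstein-based route is looser but presumably preferred here for uniformity with the rest of the paper's concentration machinery.
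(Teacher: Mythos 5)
Your proposal is correct and follows essentially the same route as the paper: the same decomposition $\X_i = n^{-1}(\bphi_i\bphi_i^\top - \T)$ with the same parameter bounds $R=\k^2/n$, $\norm{\E[(\S-\T)^2]} \leq \k^4/n$, $\tr(\E[(\S-\T)^2]) \leq \k^4/n$, and your tail-integration step is exactly what the paper has already packaged as Lemma~\ref{lem:minsker-moment}, applied with $V=\k^4/n$, $D=1$, $R=\k^2/n$ to give $(2+32)V + \tfrac{130}{9}R^2 \leq 34\k^4/n + 15\k^4/n^2$. Your worry about the intrinsic dimension is moot---since the trace bound already matches $V$ one may simply take $D=1$, which is how the constants $34$ and $15$ emerge---and your Hilbert--Schmidt sanity check $\E\norm{\S-\T}^2 \leq \tr(\E[(\S-\T)^2]) \leq \k^4/n$ is indeed valid and even sharper, the paper evidently preferring the Bernstein machinery for uniformity with its other bounds.
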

\begin{proof}
  The proof is an application of Lemma~\ref{lem:minsker-moment}.
  Define, for $1 \leq i \leq n$, $\X_i = \frac1n \parens{ \bphi_i\bphi_i^\top - \T }$, and also
  define $\Y = \sum_{i=1}^n \X_i$, so $\Y = \S - \T$.
  Clearly $\E\brackets{\X_i} = \mathbf0$ and $\norm{\X_i} \leq \k^2/n$.
  Additionally, since $\E\brackets{ \Y^2 } = n^{-1} \E\brackets{ \norm{\bphi_1}^2
  \bphi_1\bphi_1^\top - \T^2 }$, we have $\norm{\E\brackets{ \Y^2 }} \leq \k^4/n$ and
  $\tr\parens{\E\brackets{ \Y^2 }} \leq \k^4/n$.
  The claim thus follows by applying Lemma~\ref{lem:minsker-moment} with $V = \k^4/n$, $D=1$, and
  $R=\k^2/n$.
\end{proof}

\subsection{Bias bound}

\begin{lemma}
  \label{lem:bias}
  Assume that $\bb = \T^{\zeta/2}\ba$ for some $\zeta \geq 0$ and that $g_{\l}$ has qualification
  at least $\max\braces{(\zeta+1)/2,1}$.
  For any $0 < r \leq 1/2$,
  \begin{align*}
    \bias(\estl)
    & \ \leq \
    \frac{2^{\zeta+2}}{1-r}\norm{\ba}^2 \l^{\zeta+1}
    + t_1^2 \cdot \norm{\bb}^2 \cdot P(\cA_r)
    \\
    & \qquad + \ind{\zeta>1} \cdot \frac{8\zeta^2 (1+r)^{\zeta-1}}{1-r}
    \cdot \norm{\ba}^2
    \cdot \norm{\T+\l\I}^{\zeta-1}
    \cdot \E\Brackets{\norm{\S-\T}^2}
    \,.
  \end{align*}
\end{lemma}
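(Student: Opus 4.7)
The plan is to work from the matrix form of the bias (Proposition~\ref{prop:bv} and Section~\ref{sec:ell2-notations}),
\[
  \bias(\estl) \ = \ \E\Brackets{\Norm{\T^{1/2}\braces{\I - \linv(\S)\S}\T^{\zeta/2}\ba}^2},
\]
and split the expectation over the good event $\cA_r^c$ and its complement $\cA_r$. On $\cA_r$, the crude bounds $\norm{\T^{1/2}}\leq t_1$ and $\norm{\I - \linv(\S)\S}\leq 1$ (the latter from R2) immediately give the $t_1^2\norm{\bb}^2 P(\cA_r)$ contribution after taking expectation.

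On $\cA_r^c$, the event definition gives the two-sided operator inequality $(1-r)(\T+\l\I) \preceq \S+\l\I \preceq (1+r)(\T+\l\I)$, which is the main tool for exchanging $\T+\l\I$ and $\S+\l\I$. I would split the integrand as
\[
  \T^{1/2}\braces{\I-\linv(\S)\S}\T^{\zeta/2}\ba \ = \ \T^{1/2}\braces{\I-\linv(\S)\S}(\S+\l\I)^{\zeta/2}\ba \ + \ \T^{1/2}\braces{\I-\linv(\S)\S}\brackets{\T^{\zeta/2} - (\S+\l\I)^{\zeta/2}}\ba
\]
and apply $\norm{u+v}^2 \leq 2\norm{u}^2 + 2\norm{v}^2$. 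For the ``main'' piece, I extract $\T^{1/2}(\S+\l\I)^{-1/2}$, whose squared operator norm is at most $(1-r)^{-1}$ on $\cA_r^c$ by Loewner--Heinz monotonicity applied to the lower operator inequality. The remaining factor commutes with $\S$, and combining the qualification assumption $\geq (\zeta+1)/2$ with R2 gives the pointwise estimate $\abs{1-tg_\l(t)}(t+\l)^{(\zeta+1)/2}\leq 2^{(\zeta+1)/2}\l^{(\zeta+1)/2}$ (obtained by splitting on $t\leq\l$ vs.\ $t>\l$). This yields the first stated term, $\frac{2^{\zeta+2}}{1-r}\norm{\ba}^2\l^{\zeta+1}$.

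The ``correction'' piece only contributes when $\zeta>1$. I would bound $\norm{\T^{1/2}\{\I-\linv(\S)\S\}}\leq t_1\leq \norm{\T+\l\I}^{1/2}$ and control $\norm{\T^{\zeta/2}-(\S+\l\I)^{\zeta/2}}$ via the operator Lipschitz inequality $\norm{A^p-B^p}\leq p\max(\norm{A},\norm{B})^{p-1}\norm{A-B}$ with $p=\zeta/2$ (with a H\"older-type analogue serving the same role in the range $1<\zeta<2$). On $\cA_r^c$, $\max(\norm{\T},\norm{\S+\l\I})\leq (1+r)\norm{\T+\l\I}$ and $\norm{\T-(\S+\l\I)}\leq \norm{\S-\T}+\l$, with the $\l$ contribution lower-order and absorbable into the main term. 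Squaring produces $\norm{\S-\T}^2$; since this integrand is nonnegative, I would drop the restriction to $\cA_r^c$ and keep $\E[\norm{\S-\T}^2]$ as stated. The main obstacle is assembling the constants into exactly $8\zeta^2(1+r)^{\zeta-1}/(1-r)$ and handling $1<\zeta<2$, where the clean $p\geq 1$ operator Lipschitz inequality fails and one must either invoke a H\"older-type analogue or route the difference through $\S$-commuting terms in a different way.
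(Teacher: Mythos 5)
Your treatment of the main piece is fine and close in spirit to the paper (extract $\T^{1/2}(\S+\l\I)^{-1/2}$, whose squared norm is at most $(1-r)^{-1}$ on $\cA_r^c$, then use qualification $(\zeta+1)/2$ on the $\S$-commuting factor), and it even reproduces the constant $2^{\zeta+2}/(1-r)$. The gaps are in the correction piece. First, your claim that it ``only contributes when $\zeta>1$'' is not justified: for $0<\zeta\leq 1$ the operator $\T^{\zeta/2}-(\S+\l\I)^{\zeta/2}$ is nonzero, the lemma has no third term in that range, and your split gives no mechanism to make it disappear (bounding it by a H\"older-type inequality produces an $\E\norm{\S-\T}^{\zeta}\sim n^{-\zeta/2}$ contribution that is absent from the statement). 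The paper avoids any difference of powers when $\zeta\leq1$: it bounds $\Norm{\T^{\zeta/2}(\S+\l\I)^{-1}\T^{\zeta/2}}\leq \l^{\zeta-1}/(1-r)$ on $\cA_r^c$ (this step uses $s^{\zeta}/(s+\l)\leq\l^{\zeta-1}$, valid only for $\zeta\leq1$), so the whole bias is handled multiplicatively there.

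For $\zeta>1$ there are two further problems. (i) You compare $\T^{\zeta/2}$ with $(\S+\l\I)^{\zeta/2}$, so the difference is $\T-\S-\l\I$ and you pick up an extra $+\l$; this is \emph{not} absorbable into the main term, since after squaring it contributes an order-$\l^{2}$ term while the main term is $\l^{\zeta+1}\ll\l^{2}$ for $\zeta>1$ and small $\l$, nor is it dominated by $\E\norm{\S-\T}^2$ in general. The paper first uses $\T^{\zeta}\preceq(\T+\l\I)^{\zeta}$ and then compares $(\T+\l\I)^{\zeta/2}$ with $(\S+\l\I)^{\zeta/2}$, so the shifts cancel and exactly $\norm{\T-\S}$ appears. (ii) In the range $1<\zeta<2$ you acknowledge the obstacle but leave it unresolved; the H\"older analogue again yields $\E\norm{\S-\T}^{\zeta}$ rather than the required $\E\norm{\S-\T}^{2}$. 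The paper's resolution is structural: it keeps the factor $(\S+\l\I)^{1/2}\braces{\I-\linv(\S)\S}$, of size $2\l^{1/2}$ by qualification, in front of the difference (instead of your crude $t_1$ bound), rewrites the $\zeta/2$-power difference as a $\zeta$-power difference of square roots (its Lemma~\ref{lem:power}), and then uses the spectral lower bound $\l$ of the shifted operators via Lemma~\ref{lem:fractional}; the resulting $\l^{-1/2}$ is cancelled by the $\l^{1/2}$ in front. With your $t_1$ prefactor that route would leave an uncontrolled $\l^{-1}$ factor, so to complete the proof you essentially need to adopt the paper's factorization in both respects.
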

\begin{proof}
  Define $h_{\l}(\S) = \I - \S \linv(\S)$.
  Since $\abs{1-s\linv(s)} \leq 1$ for $0 < s \leq \k^2$ and $\norm{\S} \leq \k^2$, we have
  \[
    \norm{h_{\l}(\S)}
    \ \leq \
    1
    \,.
  \]
  Moreover, using $\bb = \T^{\zeta/2}\ba$,
  \begin{align*}
    \bias(\estl)
    & \ = \
    \E\Brackets{ \Norm{\T^{1/2}h_{\l}(\S)\bb}^2 }
    \\
    & \ \leq \
    \E\Brackets{ \Norm{\T^{1/2}h_{\l}(\S)\T^{\zeta/2}}^2 \cdot \ind{\cA_d^c} } \cdot \norm{\ba}^2
    + \norm{\T^{1/2}}^2 \cdot \norm{h_{\l}(\S)}^2 \cdot \norm{\bb}^2 \cdot P(\cA_d)
    \\
    & \ \leq \
    \E\Brackets{ \Norm{\T^{1/2}h_{\l}(\S)\T^{\zeta/2}}^2 \cdot \ind{\cA_d^c} } \cdot \norm{\ba}^2
    + t_1^2 \cdot \norm{\bb}^2 \cdot P(\cA_d)
    \,.
  \end{align*}
  The rest of the proof involves bounding $\E\brackets{ \norm{\T^{1/2}h_{\l}(\S)\T^{\zeta/2}}^2
  \cdot \ind{\cA_r^c} }$.
  We separately consider two cases: (i) $\zeta \leq 1$, and (ii) $\zeta > 1$.

  \emph{Case 1: $\zeta \leq 1$.}
  Since $\linv$ has qualification at least $1$, it follows that $\abs{(s+\l)(1-s\linv(s))} \leq 2\l$
  for $0 < s \leq \k^2$.
  This implies
  \begin{align}
    \Norm{\T^{1/2}h_{\l}(\S)\T^{\zeta/2}}^2
    & \ \leq \
    \Norm{\T^{1/2}(\S+\l\I)^{-1/2}}^2
    \cdot \Norm{(\S+\l\I)h_{\l}(\S)}^2
    \cdot \Norm{\T^{\zeta/2}(\S+\l\I)^{-1/2}}^2
    \notag \\
    & \ \leq \
    4\l^2
    \cdot \Norm{\T^{1/2}(\S+\l\I)^{-1}\T^{1/2}}
    \cdot \Norm{\T^{\zeta/2}(\S+\l\I)^{-1}\T^{\zeta/2}}
    \,.
    \label{eq:bias-matrix1}
  \end{align}
  For $0 \leq z \leq 1$,
  \begin{align}
      \Norm{\T^{z/2}(\S+\l\I)^{-1}\T^{z/2}}
    & \ = \
    \Norm{\T^{z/2}\Braces{\Parens{\S-\T}+\Parens{\T+\l\I}}^{-1}\T^{z/2}}
    \notag \\
    & \ \leq \
    \Norm{\T^{z/2}(\T+\l\I)^{-1/2}}^2
    \cdot \Norm{(\T+\l\I)^{1/2}\Braces{\Parens{\S-\T}+\Parens{\T+\l\I}}^{-1}(\T+\l\I)^{1/2}}
    \notag \\
    & \ = \
    \Norm{\T^z(\T+\l\I)^{-1}}
    \cdot \Norm{
      \Parens{
        \I - (\T+\l\I)^{-1/2} \Parens{\T-\S} (\T+\l\I)^{-1/2}
      }^{-1}
    }
    \notag \\
    & \ \leq \
    \l^{z-1}
    \cdot \Norm{
      \Parens{
        \I - (\T+\l\I)^{-1/2} \Parens{\T-\S} (\T+\l\I)^{-1/2}
      }^{-1}
    }
    \,,
    \label{eq:bias-matrix2}
  \end{align}
  where the final inequality uses the fact $s^z/(s+\l) \leq \l^{z-1}$ for $0 \leq z \leq 1$ and $s
  \geq 0$.
  The final quantity in~\eqref{eq:bias-matrix2} is bounded above by $\l^{z-1} / (1-r)$ on the event
  $\cA_r^c$, so applying this inequality with $z=1$ and $z=\zeta$ to~\eqref{eq:bias-matrix1} gives
  \begin{align*}
    \Norm{\T^{1/2}h_{\l}(\S)\T^{\zeta/2}}^2 \cdot \ind{\cA_r^c}
    & \ \leq \
    4\l^2 \cdot \frac{1}{1-r} \cdot \frac{\l^{\zeta-1}}{1-r}
    \ = \
    \frac{4\l^{\zeta+1}}{(1-r)^2}
    \,.
  \end{align*}
  So the bias in this case is bounded as
  \[
    \bias(\estl)
    \ \leq \
    \Parens{ \frac{2}{1-r} }^2 \cdot \norm{\ba}^2 \cdot \l^{\zeta+1}
    + t_1^2 \cdot \norm{\bb}^2 \cdot P(\cA_r)
    \,.
  \]

  \emph{Case 2: $\zeta>1$.}
  We have
  \begin{align}
    \Norm{\T^{1/2}h_{\l}(\S)\T^{\zeta/2}}^2
    & \ \leq \
    \Norm{\T^{1/2}(\S+\l\I)^{-1/2}}^2
    \cdot \Norm{(\S+\l\I)^{1/2}h_{\l}(\S)\T^{\zeta/2}}^2
    \notag \\
    & \ = \
    \Norm{\T^{1/2}(\S+\l\I)^{-1}\T^{1/2}}
    \cdot \Norm{(\S+\l\I)^{1/2}h_{\l}(\S)\T^{\zeta/2}}^2
    \notag \\
    & \ \leq \
    \Norm{\T^{1/2}(\S+\l\I)^{-1}\T^{1/2}}
    \cdot \Norm{(\S+\l\I)^{1/2}h_{\l}(\S)(\T+\l\I)^{\zeta/2}}^2
    \,.
    \label{eq:bias-matrix3}
  \end{align}
  The first factor on the right-hand side of~\eqref{eq:bias-matrix3} can be bounded
  using~\eqref{eq:bias-matrix2} on the event $\cA_r^c$.
  For the second factor, we have
  \begin{align}
    \lefteqn{
      \Norm{
        (\S+\l\I)^{1/2} h_{\l}(\S) (\T+\l\I)^{\zeta/2}
      }
    }
    \notag \\
    & \ \leq \
    \Norm{
      (\S+\l\I)^{(\zeta+1)/2} h_{\l}(\S)
    }
    + \Norm{
      (\S+\l\I)^{1/2} h_{\l}(\S)
      \Braces{
        (\T+\l\I)^{\zeta/2}
        - (\S+\l\I)^{\zeta/2}
      }
    }
    \notag \\
    & \ \leq \
    (2\l)^{(\zeta+1)/2}
    + 2\l^{1/2} \cdot
    \Norm{ (\T+\l\I)^{\zeta/2} - (\S+\l\I)^{\zeta/2} }
    \,.
    \label{eq:bias-matrix4}
  \end{align}
  Above, the first inequality is due to the triangle inequality, and the second inequality uses the
  facts that $\linv$ has qualification at least $(\zeta+1)/2$, and that $(s+\l)^{(z+1)/2} \leq
  2^{-1+(z+1)/2}(s^{(z+1)/2}+\l^{(z+1)/2})$ for $s\geq0$ and $z\geq1$.

  We now bound $\norm{ (\T+\l\I)^{\zeta/2} - (\S+\l\I)^{\zeta/2} }$ in terms of $\norm{\T-\S}$.
  First, observe that on the event $\cA_r^c$,
  \begin{align*}
    \norm{\T-\S}
    & \ \leq \
    \norm{\T+\l\I} \cdot
    \norm{(\T+\l\I)^{-1/2}(\T-\S)(\T+\l\I)^{-1/2}}
    \ < \
    r \cdot \norm{\T+\l\I}
    \,,
  \end{align*}
  and, consequently,
  \begin{align*}
    \norm{\S+\l\I}
    & \ \leq \
    (1+r) \cdot \norm{\T+\l\I}
    \,.
  \end{align*}
  For a small constant $s>0$, define
  \begin{align*}
    \A_s & \ = \ \frac1{(1+r+s) \norm{\T+\l\I}} \cdot (\T+\l\I) \,, \\
    \B_s & \ = \ \frac1{(1+r+s) \norm{\T+\l\I}} \cdot (\S+\l\I) \,.
  \end{align*}
  Then, on the event $\cA_r^c$, applying Lemma~\ref{lem:power} and Lemma~\ref{lem:fractional} gives
  \begin{align*}
    \Norm{\A_s^{\zeta/2} - \B_s^{\zeta/2}}
    & \ \leq \
    2\zeta \cdot \Norm{\A_s^{1/2} - \B_s^{1/2}}
    \\
    & \ \leq \
    \zeta \cdot \Braces{ \frac{\l}{(1+r+s)\norm{\T+\l\I}} }^{-1/2}
    \cdot \Norm{\A_s-\B_s}
    \\
    & \ = \
    \zeta \cdot \Braces{ \l \cdot (1+r+s)\norm{\T+\l\I} }^{-1/2}
    \cdot \Norm{\T-\S}
    \,.
  \end{align*}
  Taking $s\to0$, it follows that on $\cA_r^c$,
  \begin{align}
    \Norm{ (\T+\l\I)^{\zeta/2} - (\S+\l\I)^{\zeta/2} }
    & \ \leq \
    \zeta \cdot \l^{-1/2} \cdot \Braces{ (1+r)\norm{\T+\l\I} }^{(\zeta-1)/2}
    \cdot \Norm{\T-\S}
    \,.
    \label{eq:bias-matrix5}
  \end{align}

  Combining~\eqref{eq:bias-matrix4} and~\eqref{eq:bias-matrix5} gives
  \begin{align*}
    \Norm{ (\S+\l\I)^{1/2} h_{\l}(\S) (\T+\l\I)^{\zeta/2} }
    & \ \leq \
    (2\l)^{(\zeta+1)/2}
    + 2\zeta \cdot \Braces{ (1+r)\norm{\T+\l\I} }^{(\zeta-1)/2}
    \cdot \Norm{\T-\S}
    \,.
  \end{align*}
  Using this together with~\eqref{eq:bias-matrix2} in~\eqref{eq:bias-matrix3}
  and
  \begin{align*}
      \Norm{\T^{1/2}h_{\l}(\S)\T^{\zeta/2}}^2 \cdot \ind{\cA_r^c}
    & \ \leq \
    \frac1{1-r}
    \cdot \Parens{
      (2\l)^{(\zeta+1)/2}
      + 2\zeta \cdot \Braces{ (1+r)\norm{\T+\l\I} }^{(\zeta-1)/2}
      \cdot \Norm{\T-\S}
    }^2
    \,.
  \end{align*}
  Therefore
  \begin{equation*}
    \bias(\estl)
    \ \leq \
    \frac{\norm{\ba}^2}{1-r}
    \cdot \Parens{
      2 \cdot (2\l)^{\zeta+1}
      + 8\zeta^2 \cdot \Braces{ (1+r)\norm{\T+\l\I} }^{\zeta-1}
      \cdot \E\Brackets{ \Norm{\T-\S}^2 }
    }
    + t_1^2 \cdot \norm{\bb}^2 \cdot P(\cA_r)
    \,.
    \qedhere
  \end{equation*}
\end{proof}

\subsection{Variance bound}

\begin{lemma}
  \label{lem:variance}
  For any $0 < r < 1$,
  \begin{align*}\label{newVbd}
    \variance(\estl)
    & \ \leq \
    \frac{2}{1-r} \cdot \frac{\dl\s^2}{n}
    + \frac{\k^2\s^2}{\l n} \cdot P(\cA_r)
    \,.
  \end{align*}
\end{lemma}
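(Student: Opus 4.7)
My plan is a standard conditional-variance argument: first condition on the design $X$ to reduce to a deterministic trace, then use an algebraic property of the regularization family to compare $\S\linv(\S)^2$ to $(\S+\l\I)^{-1}$, and finally split on the event $\cA_r$ so that $\S$ can be replaced by $\T$ on the favorable side.  The residual bound~\eqref{resid} gives $\E[\ee\ee^\top \mid X] \preceq \s^2$ times the $n \times n$ identity, and $\bPhi^\top\bPhi = n\S$, so
\begin{align*}
  \variance(\estl)
  & \ = \ \frac{1}{n^2} \E\Brackets{ \Norm{\T^{1/2}\linv(\S)\bPhi^\top\ee}^2 }
  \ \leq \ \frac{\s^2}{n^2} \E\Brackets{ \tr\Parens{\T^{1/2}\linv(\S)\bPhi^\top\bPhi\linv(\S)\T^{1/2}} }
  \\
  & \ = \ \frac{\s^2}{n} \E\Brackets{ \tr\Parens{\S\linv(\S)^2\T} }
\end{align*}
by the cyclic property of the trace, so everything reduces to bounding the final expectation.

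The key algebraic step is to note that R1 and R3 together yield, for all $0 < s \leq \k^2$,
\[
  s\linv(s)^2(s+\l) \ = \ (s\linv(s))^2 + \l\linv(s)\cdot s\linv(s) \ \leq \ 1 + 1 \ = \ 2,
\]
so $s\linv(s)^2 \leq 2/(s+\l)$ on the spectrum of $\S$.  Functional calculus then gives the operator inequality $\S\linv(\S)^2 \preceq 2(\S+\l\I)^{-1}$ and hence $\tr(\S\linv(\S)^2\T) \leq 2 \tr(\T^{1/2}(\S+\l\I)^{-1}\T^{1/2})$.  Now I split on $\cA_r$.  On $\cA_r^c$ the defining inequality rearranges to $\S+\l\I \succeq (1-r)(\T+\l\I)$, so $(\S+\l\I)^{-1} \preceq (1-r)^{-1}(\T+\l\I)^{-1}$ and
\[
  \tr\Parens{\S\linv(\S)^2\T}\cdot\ind{\cA_r^c}
  \ \leq \ \frac{2}{1-r}\,\tr\Parens{\T(\T+\l\I)^{-1}}
  \ = \ \frac{2\dl}{1-r}.
\]
On $\cA_r$ I fall back on the crude bound $\S\linv(\S)^2 \preceq \linv(\S) \preceq \l^{-1}\I$ (from R1 and R3), giving $\tr(\S\linv(\S)^2\T)\cdot\ind{\cA_r} \leq \l^{-1}\tr(\T)\cdot\ind{\cA_r} \leq (\k^2/\l)\cdot\ind{\cA_r}$.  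Taking expectations and combining with the first-paragraph reduction yields exactly the stated bound.

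The only mildly delicate ingredient is the algebraic inequality $s\linv(s)^2 \leq 2/(s+\l)$; without it one would only recover the coarse estimate $\tr(\S\linv(\S)^2\T) \leq \k^2/\l$ on $\cA_r^c$ as well, which is loose by a factor of order $\k^2/(\dl\l)$ and would destroy the dependence on the effective dimension $\dl$ that drives the minimax rates in Theorem~\ref{thm:UB}.  Everything else is routine trace manipulation and operator monotonicity, and the case split on $\cA_r$ is standard given the large-deviation estimate of Lemma~\ref{lem:tail}.
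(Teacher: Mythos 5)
Your proof is correct and follows essentially the same route as the paper's: the same conditional reduction to $\frac{\s^2}{n}\E[\tr(\T\linv(\S)^2\S)]$, the same key inequality $(s+\l)s\linv(s)^2\leq 2$ from R1 and R3, the same split on $\cA_r$ with the crude $\k^2/\l$ bound on the bad event and the effective dimension $\dl/(1-r)$ on the good event. The only cosmetic difference is that you pass from $(\S+\l\I)^{-1}$ to $(\T+\l\I)^{-1}$ via the Loewner-order comparison $\S+\l\I\succeq(1-r)(\T+\l\I)$ and inverse anti-monotonicity, whereas the paper uses trace--norm (Von Neumann) factorizations and bounds the sandwiched operator $(\T+\l\I)^{1/2}(\S+\l\I)^{-1}(\T+\l\I)^{1/2}$ directly; these are equivalent.
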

\begin{proof}
  The assumption on $\ee = (\e_1,\dotsc,\e_n)^\top$ implies $\E\brackets{\e_i} = 0$,
  $\E\brackets{\e_i\e_j} = 0$ for $i \neq j$, and $\E\brackets{\e_i^2} \leq \sigma^2$.
  So, by Von Neumann's inequality,
  \begin{align*}
    \variance(\estl)
    & \ = \ \frac{1}{n^2}\E\Brackets{ \Norm{ \T^{1/2}\linv(\S)\bPhi^{\top}\ee }^2 }
    \ \leq \
    \frac{\sigma^2}{n}\E\Brackets{ \tr\Parens{ \T\linv(\S)^2\S } }
    \,.
  \end{align*}
  Using Von Neumann's inequality together with $\tr(\T) \leq \k^2$ and $\linv(s)^2s \leq 1/\l$ for
  $0 < s \leq \k^2$,
  \begin{align*}
    \tr\Parens{ \T\linv(\S)^2\S }
    & \ \leq \
    \tr(\T) \Norm{\linv(\S)^2\S}
    \ \leq \
    \frac{\k^2}{\l}
    \,.
  \end{align*}
  Therefore
  \begin{align*}
    \variance(\estl)
    & \ \leq \
    \frac{\sigma^2}{n}\E\Brackets{ \tr\Parens{ \T\linv(\S)^2\S } \cdot \ind{\cA_r^c} }
    + \frac{\k^2\s^2}{\l n} \cdot P(\cA_r)
    \,.
  \end{align*}
  Using Von Neumann's inequality twice more, and $(s+\l)\linv(s)^2s \leq 2$ for $0 < s \leq
  \k^2$, we have
  \begin{align*}
    \tr\Parens{ \T\linv(\S)^2\S } 
    & \ = \
    \tr\Parens{ \T(\S+\l\I)^{-1}(\S+\l\I)\linv(\S)^2\S } 
    \\
    & \ \leq \
    \tr\Parens{ \T(\S+\l\I)^{-1} } \Norm{ (\S+\l\I)\linv(\S)^2\S } 
    \\
    & \ \leq \
    2\tr\Parens{ \T(\S+\l\I)^{-1} }
    \\
    & \ = \
    2\tr\Parens{ \T(\T+\l\I)^{-1}(\T+\l\I)^{1/2}(\S+\l\I)^{-1}(\T+\l\I)^{1/2} }
    \\
    & \ \leq \
    2\tr\Parens{ \T(\T+\l\I)^{-1} } \Norm{ (\T+\l\I)^{1/2}(\S+\l\I)^{-1}(\T+\l\I)^{1/2} }
    \\
    & \ = \
    2\dl \Norm{ \Parens{ \I - (\T+\l\I)^{1/2}(\T-\S)^{-1}(\T+\l\I)^{1/2}
    }^{-1} }
    \,.
  \end{align*}
  This final quantity is bounded above by $2\dl/(1-r)$ on the event $\cA_d^c$.
\end{proof}

\subsection{Finishing the proof of Theorem~\ref{thm:UB}}

Using the bias-variance decomposition from Proposition~\ref{prop:bv}, we apply the bias bound
(Lemma~\ref{lem:bias}) and variance bound (Lemma~\ref{lem:variance}) to obtain a bound on the risk:
\begin{align*}
  \risk(\estl)
  & \ = \
  \bias(\estl) + \variance(\estl)
  \\
  & \ \leq \
  \frac{2^{\zeta+2}}{1-r}\norm{\ba}^2 \l^{\zeta+1}
  + t_1^2 \cdot \norm{\bb}^2 \cdot P(\cA_r)
  \\
  & \qquad + \ind{\zeta>1} \cdot \frac{8\zeta^2 (1+r)^{\zeta-1}}{1-r}
  \cdot \norm{\ba}^2
  \cdot \parens{ t_1^2 + \l }^{\zeta-1}
  \cdot \E\Brackets{\norm{\S-\T}^2}
  \\
  & \qquad + \frac{2}{1-r} \cdot \frac{\dl\s^2}{n} + \frac{\k^2\s^2}{\l n} \cdot P(\cA_r)
  \,.
\end{align*}
Now set $r = 1/2$, and apply Lemma~\ref{lem:tail} to bound $P(\cA_r)$.  Note that the assumption
$(8/3+2\sqrt{5/3}) \k_{\d}^2 / n \leq \l^{1-\d}$ satisfies the conditions on $r$ in
Lemma~\ref{lem:tail} for $r=1/2$.
Finally, apply Lemma~\ref{lem:moment} to bound
$\E\brackets{\norm{\S-\T}^2}$.

\section{Proof of Proposition~\ref{prop:finite_adapt}}
\label{sec:proof-finite_adapt}
\newcommand\wh\widehat
\newcommand\hU{\widehat{\U}}

Let $\hat{t}_1^2 \geq \hat{t}_2^2 \geq \cdots \geq 0$  denote the eigenvalues of
$\S$ and define $\wh{\T} = \diag(\hat{t}_1^2,\hat{t}_2^2,\dotsc)$.
Let $\hU$ be an orthogonal transformation satisfying $\S =
\hU\wh{\T}\hU^{\top}$.
Additionally, let $h_{\l}(t) = I\{t \leq \l\}$, define $\hat{J} = \hat{J}_{\l} =
\inf\braces{j : \hat{t}_j^2 > \l}$, and write $\hU = (\hU_- \ \ \hU_+)$,
where $\hU_-$ is the $\infty \times \hat{J}$ matrix comprised of the first
$\hat{J}$ columns of $\hU$, and $\hU_+$ consists of the remaining
columns of $\hU$.
Finally, define $\T_- = \diag(t_1^2,\ldots,t_J^2)$, $\wh{\T}_+ =
\diag(\hat{t}_{\hat{J}+1}^2,\hat{t}_{\hat{J}+2}^2,\dotsc)$, and define the
$\infty \times J$ matrix $\U_- = (\I_J \ \ \mathbf{0})^{\top}$, where $\I_J$ is the $J
\times J$ identity matrix.  

Now consider the bias term $\bias(\kpcr) = \E\brackets{
  \norm{\T^{1/2}h_{\l}(\S)\bb}^2 }$, and observe that
\[
  \Norm{
    \T^{1/2}h_{\l}(\S)\bb
  }^2
  \ = \
  \Norm{
    \T^{1/2}\hU_{+}\hU_{+}^{\top}\U_-\U_-^{\top}\bb
  }^2
  \ \leq  \
  \k^2\Norm{
    \hU_{+}^{\top}\U_-
  }^2
  \norm{f^{\dagger}}_{\H}^2
  \,.
\]
Thus, 
\begin{equation}\label{pbound}
  \bias(\kpcr)
  \ \leq \
  \k^2 \norm{f^{\dagger}}_{\H}^2
  \cdot
  \E\Parens{
    \Norm{
      \hU_{+}^{\top}\U_-
    }^2 \cdot \ind{\hat{J} \geq J}
  } + \k^2\norm{f^{\dagger}}_{\H}^2
  \cdot P(\hat{J} < J)
  \,.
\end{equation}
Now we bound $\Vert\hU_{+}^{\top}\U_-\Vert$ on the event $\{\hat{J}\geq J\}$.
We derive this bound from basic principles, but it is essentially the
Davis-Kahan inequality~\citep{davis1970rotation}.
Let $\bD =\S - \T$.  
Then 
\[
  \bD \U_-
  \ = \
  \S \U_- - \T \U_-
  \ = \
  \S \U_- - \U_- \T_-
\]
and 
\[
  \U_-^{\top} \bD \hU_+
  \ = \
  \U_-^{\top}\S \hU_+ - \T_- \U_-^{\top} \hU_{+}
  \ = \
  \U_-^{\top} \hU_+ \wh{\T}_+ - \T_-\U_-^{\top} \hU_+
  \,.
\]
Next notice that
\begin{align*}
  \norm{\bD}
  & \ \geq \
  \norm{ \U_-^{\top} \bD \hU_+ }
  \ \geq \
  \norm{ \T_- \U_-^{\top} \hU_+ } - \norm{ \U_-^{\top} \hU_+ \wh{\T}_+ }
  \\
  & \ \geq \
  t_J^2
  \norm{ \U_-^{\top} \hU_+ }
  - (1  - r)t_J^2
  \norm{ \U_-^{\top} \hU_+ }
  \ = \
  rt_J^2 \norm{ \U_-^{\top} \hU_+ }
  \,.
\end{align*}
Thus,
\begin{equation}\label{DKb}
  \norm{ \U_-^{\top} \hU_+ }
  \ \leq \ \frac{1}{rt_J^2} \norm{ \bD }
\end{equation}
on the event $\braces{ \hat{J} > J }$.

Next we bound $P(\hat{J} < J) = P(\hat{t}^2_J \leq \l) = P\braces{\hat{t}^2_J
\leq (1 - r) t_J^2}$.
By Weyl's inequality, 
\[
  \abs{ \hat{t}_J^2 - t_J^2 }
  \ \leq \
  \norm{\bD}
  \ = \
  \norm{ \S - \T }
  \,,
\]
and, by Lemma~\ref{lem:minsker}, 
\[
  P\Parens{
    \norm{\S  - \T}
    \geq rt_J^2
  }
  \ \leq \
  4\exp\Parens{
    -\frac{n r^2t_J^4}{2(\k^4 + \k^2rt_J^2/3)}
  }
  \,, 
\]
provided $rt_J^2 \geq \k^2/n^{1/2} + \k^2/(3n)$.  
Thus, 
\begin{equation}\label{LLD}
  P(\hat{J} < J)
  \ = \
  P\Braces{
    \hat{t}^2_J \leq (1 - r) t_J^2
  }
  \ \leq \
  4\exp\Parens{
    -\frac{n r^2t_J^4}{2(\k^4 + \k^2rt_J^2/3)}
  }
  \,.
\end{equation}

Combining \eqref{pbound}--\eqref{LLD} and using Lemma~\ref{lem:moment} gives 
\begin{align*}
  \bias(\kpcr)
  & \ \leq \
  \frac{\k^2}{r^2t_J^4}
  \norm{f^{\dagger}}_{\H}^2
  \E\Brackets{ \norm{ \S - \T }^2 }
  + 4\k^2 \norm{f^{\dagger}}_{\H}^2
  \exp\Parens{
    -\frac{nr^2t_J^4}{2(\k^4 + \k^2rt_J^2/3)}
  }
  \\
  & \ \leq \
  \frac{\k^2}{r^2t_J^4}
  \norm{f^{\dagger}}_{\H}^2
  \Parens{\frac{34\k^4}{n} + \frac{15\k^4}{n^2}}
  + 4\k^2\norm{f^{\dagger}}_{\H}^2
  \exp\Parens{-\frac{nr^2t_J^4}{2(\k^4 + \k^2rt_J^2/3)} }
  \,.
\end{align*}
Next, we combine this bound on $\mathrm{B}_{\rho}(\kpcr)$ with the
variance bound
Lemma \ref{lem:variance} (taking $r = 0$ in the lemma) to
obtain
\begin{align*}
  \risk(\kpcr)
  & \ = \
  \bias(\estl)+ \variance(\estl)
  \\
  & \ \leq \
  \frac{\k^2}{r^2t_J^4} \norm{f^{\dagger}}_{\H}^2 \Parens{\frac{34\k^4}{n} + \frac{15\k^4}{n^2}}
  + 4\k^2\norm{f^{\dagger}}_{\H}^2 \exp\Parens{-\frac{nr^2t_J^4}{2(\k^4 + \k^2rt_J^2/3)} }
  + \frac{2\dl\s^2}{n} + \frac{\k^2\s^2}{\l n}
  \\
  & \ \leq \
  \frac{\k^2}{r^2t_J^4} \norm{f^{\dagger}}_{\H}^2 \Parens{\frac{34\k^4}{n} + \frac{15\k^4}{n^2}}
  + 4\k^2\norm{f^{\dagger}}_{\H}^2 \exp\Parens{-\frac{nr^2t_J^4}{2(\k^4 + \k^2rt_J^2/3)} }
  + \frac{3\kappa^2\s^2}{(1-r)t_J^2n}
  \\
  & \ = \
  \frac1n\Braces{
    \frac{34\k^6}{r^2t_J^4} \norm{f^{\dagger}}_{\H}^2 
    + \frac{3\kappa^2}{(1-r)t_J^2} \s^2
  }
  +
  \k^2 \norm{f^{\dagger}}_{\H}^2
  \Braces{
    \frac{15\k^4}{r^2t_J^4n^2}
    + 4\exp\Parens{-\frac{nr^2t_J^4}{2(\k^4 + \k^2rt_J^2/3)} }
  }
  \,.
\end{align*}
This completes the proof of the proposition.

\section{Supporting results}

\subsection{Sums of random operators}

\begin{lemma}
  \label{lem:minsker}
  Let $R>0$ be a positive real constant and consider a finite sequence of self-adjoint Hilbert-Schmidt
  operators $\braces{ \X_i }_{i=1}^n$ satisfying $\E\brackets{\X_i} = \mathbf0$ and $\norm{\X_i}
  \leq R$ almost surely.
  Define $\Y = \sum_{i=1}^n \X_i$ and suppose there are constants $V,D > 0$ satisfying
  $\norm{\E\brackets{ \Y^2 }} \leq V$ and $\tr\parens{\E\brackets{\Y^2}} \leq VD$.
  For all $t \geq V^{1/2} + R/3$,
  \[
    P\Parens{ \norm{\Y} \geq t }
    \ \leq \
    4D\exp\Parens{ -\frac{t^2}{2(V+Rt/3)} }
    \,.
  \]
\end{lemma}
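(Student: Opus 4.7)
The plan is to prove this as a Hilbert-Schmidt extension of Minsker's intrinsic-dimension Bernstein inequality for random self-adjoint matrices. The argument combines three standard ingredients---a Chernoff-type bound via an auxiliary nonnegative function, the Lieb/Ahlswede-Winter matrix MGF technique, and Minsker's intrinsic-dimension trace inequality---together with a finite-rank truncation that reduces the problem to the matrix setting.

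First I would symmetrize. Since each $\X_i$ is self-adjoint, $\norm{\Y} = \max\braces{\lambda_{\max}(\Y), \lambda_{\max}(-\Y)}$, and the hypotheses are invariant under $\X_i \mapsto -\X_i$; a union bound reduces the claim to the one-sided tail $P(\lambda_{\max}(\Y) \geq t) \leq 2D \exp(-t^2/(2(V+Rt/3)))$. To bring matrix tools to bear, let $P_m$ denote the projection onto the span of the top-$m$ eigenvectors of the compact operator $\E[\Y^2]$, and consider the compressions $P_m \X_i P_m$. These are mean-zero self-adjoint operators on an $m$-dimensional subspace with operator norm at most $R$, and independence together with $\E[\X_i] = 0$ gives
\[
  \sum_i \E[(P_m \X_i P_m)^2] \ = \ P_m \E[\Y P_m \Y] P_m \ \preceq \ P_m \E[\Y^2] P_m,
\]
so its operator norm is at most $V$ and its trace at most $VD$. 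Because $\Y$ is Hilbert-Schmidt (hence compact) and $\ker(\E[\Y^2]) \subseteq \ker(\Y)$ almost surely (from $\dotp{v, \E[\Y^2]v} = \E\norm{\Y v}^2$), one has $\norm{P_m \Y P_m} \to \norm{\Y}$ almost surely as $m \to \infty$.

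For fixed $m$ I would then apply the matrix intrinsic-dimension Bernstein argument. With the auxiliary function $\phi(x) = e^x - 1 - x$ (nonnegative, convex, increasing on $[0,\infty)$, with $\phi(0) = 0$), the inequality $\phi(\lambda_{\max}(\A)) \leq \tr(\phi(\A))$ for self-adjoint $\A$ combined with Markov yields $P(\lambda_{\max}(P_m \Y P_m) \geq t) \leq \E[\tr(\phi(\theta P_m \Y P_m))] / \phi(\theta t)$ for every $\theta > 0$. The numerator is controlled by combining the scalar MGF bound $\log \E[\exp(\theta \X_i)] \preceq g(\theta) \E[\X_i^2]$ with $g(\theta) = (\theta^2/2)/(1-R\theta/3)$, Lieb's concavity inequality, and the finite-dimensional identity $\tr(\phi(\A)) = \tr(e^{\A}) - \tr(\I_m) - \tr(\A)$ (together with $\E[\tr(P_m \Y P_m)] = 0$). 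These combine to give $\E[\tr(\phi(\theta P_m \Y P_m))] \leq \tr(\psi(g(\theta) P_m \E[\Y^2] P_m))$ with $\psi(x) = e^x - 1$. Minsker's intrinsic-dimension inequality $\tr(\psi(\A)) \leq (\tr(\A)/\norm{\A})\psi(\norm{\A})$ for PSD $\A$ (which follows from $\psi(x)/x$ being nondecreasing on $(0,\infty)$) then yields $D \cdot (e^{g(\theta)V}-1)$. Optimizing over $\theta \in (0, 3/R)$ in the usual Bernstein fashion produces the exponent $-t^2/(2(V+Rt/3))$; the hypothesis $t \geq V^{1/2} + R/3$ ensures the optimum lies in the admissible range.

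Finally, letting $m \to \infty$ and invoking the almost-sure convergence $\norm{P_m \Y P_m} \to \norm{\Y}$ transfers the finite-$m$ tail bound to $\Y$, yielding the claimed inequality. The main obstacle is this truncation-and-limit step: verifying that the variance bounds transfer cleanly via the PSD-domination $\sum_i \E[(P_m \X_i P_m)^2] \preceq P_m \E[\Y^2] P_m$, and that $\norm{P_m \Y P_m} \to \norm{\Y}$ in a strong enough sense to preserve the tail bound. The remaining pieces---the Chernoff bound, Lieb's inequality, and the intrinsic-dimension trace bound---are essentially the standard matrix-case argument of Tropp and Minsker, carried out one finite dimension at a time.
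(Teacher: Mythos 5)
Your proposal is correct and takes essentially the same route as the paper, whose entire proof of this lemma is the instruction to generalize Tropp's intrinsic-dimension matrix Bernstein bound (Theorem 7.7.1) to Hilbert--Schmidt operators via Minsker's arguments; your Chernoff/Lieb/intrinsic-dimension chain is exactly that argument, and the claimed constants do check out, since the choice $\theta=t/(V+Rt/3)$ gives $g(\theta)V=\theta t/2$ with $a=\theta t\ge1$ under $t\ge V^{1/2}+R/3$, and $(e^{a/2}-1)/(e^a-a-1)\le 2e^{-a/2}$ for $a\ge1$, yielding $2D$ per side and $4D$ after the two-sided union bound. The only difference is one of implementation: you pass to finite dimensions by compressing with spectral projections of $\E[\Y^2]$ and taking limits, whereas Minsker works with the Hilbert--Schmidt operators directly, but your compression step is sound, since $\sum_i\E[(P_m\X_iP_m)^2]\preceq P_m\E[\Y^2]P_m$ and $\norm{P_m\Y P_m}\to\norm{\Y}$ almost surely because $\Y$ is compact, vanishes on $\ker\E[\Y^2]$, and $P_m$ converges strongly to the projection onto the closure of the range of $\E[\Y^2]$.
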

\begin{proof}
  This is a straightforward generalization of~\citep[Theorem 7.7.1]{tropp2015intro}, using the
  arguments from \citep[Section 4]{minsker2011bernstein} to extend from self-adjoint matrices to
  self-adjoint Hilbert-Schmidt operators.
\end{proof}

\begin{lemma}
  \label{lem:minsker-moment}
  In the same setting as Lemma~\ref{lem:minsker},
  \[
    \E\Brackets{ \norm{\Y}^2 }
    \ \leq \
    (2+32D)V + \Parens{ \frac{2+128D}{9} } R^2
    \,.
  \]
\end{lemma}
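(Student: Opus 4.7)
The plan is a straightforward tail-integration on top of Lemma~\ref{lem:minsker}. I would start from the identity
\[
  \E\brackets{\norm{\Y}^2}
  \ = \
  \int_0^\infty 2t \, P\parens{\norm{\Y} \geq t} \, dt
\]
and split the range at the threshold $t_0 := V^{1/2} + R/3$ at which Lemma~\ref{lem:minsker} becomes active. For $t < t_0$, the trivial bound $P(\norm{\Y}\geq t) \leq 1$ contributes at most $t_0^2$; expanding the square and using AM--GM in the form $2V^{1/2}R/3 \leq V + R^2/9$ gives $t_0^2 \leq 2V + 2R^2/9$, which already accounts for the $D$-free part of the target bound.

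For $t \geq t_0$, I would substitute the Bernstein tail from Lemma~\ref{lem:minsker} and decompose the exponent into a sub-Gaussian and a sub-exponential piece via
\[
  \exp\parens{-\frac{t^2}{2(V + Rt/3)}}
  \ \leq \
  \exp\parens{-\frac{t^2}{4V}}
  + \exp\parens{-\frac{3t}{4R}},
\]
which holds because $V + Rt/3 \leq 2\max\braces{V,\, Rt/3}$ for every $t \geq 0$, and the maximum is achieved by whichever summand is larger. Extending the lower limit back down to $0$ (loosening the bound) reduces the problem to the two elementary integrals
\[
  \int_0^\infty 2t \, e^{-t^2/(4V)} \, dt \ = \ 4V,
  \qquad
  \int_0^\infty 2t \, e^{-3t/(4R)} \, dt \ = \ \frac{32 R^2}{9},
\]
the first by the substitution $u = t^2/(4V)$, the second by $\int_0^\infty t e^{-at}\,dt = a^{-2}$ with $a = 3/(4R)$. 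Multiplying by the $4D$ prefactor inherited from Lemma~\ref{lem:minsker} and adding the bulk contribution $t_0^2$ yields a bound of the claimed form $(c_1 + c_2 D)V + (c_3 + c_4 D) R^2/9$, with the constants matching the statement up to straightforward bookkeeping.

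No step here is a real obstacle: the only inputs are Lemma~\ref{lem:minsker}, the elementary two-regime split of the Bernstein exponent, and direct evaluation of a Gaussian moment and an exponential moment. The argument is simply the standard matrix-Bernstein-to-moment passage (as in~\citep{tropp2015intro}), transferred to the Hilbert--Schmidt setting through the extension already carried out in Lemma~\ref{lem:minsker}.
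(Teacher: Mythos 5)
Your proof is correct and takes essentially the same route as the paper: integrate the tail bound from Lemma~\ref{lem:minsker}, absorb the region below the threshold $(V^{1/2}+R/3)^2 \leq 2V + 2R^2/9$ by bounding the probability by $1$, and control the Bernstein exponent through the sub-Gaussian/sub-exponential dichotomy. The only (harmless) difference is that you handle the two regimes via the pointwise bound $\exp\bigl(-t^2/(2(V+Rt/3))\bigr) \leq \exp\bigl(-t^2/(4V)\bigr) + \exp\bigl(-3t/(4R)\bigr)$ and integrate each term over all of $[0,\infty)$, whereas the paper splits the integration range at $9V^2/R^2$ and evaluates the incomplete integrals; your variant even gives the slightly better coefficient $(2+16D)V$, which of course implies the stated $(2+32D)V$.
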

\begin{proof}
  The proof is based on integrating the tail bound from Lemma~\ref{lem:minsker}:
  \begin{align*}
    \lefteqn{
      \E\Brackets{ \norm{\Y}^2 }
      \ = \ \int_0^\infty P\Parens{ \norm{\Y} \geq t^{1/2} } \dif t
    } \\
    & \ \leq \
    \Parens{ V^{1/2} + \frac{R}{3} }^2
    + \int_{\Parens{ V^{1/2} + \frac{R}{3} }^2}^\infty 4D\exp\Parens{ -\frac{t}{2(V+Rt^{1/2}/3)} } \dif t
    \\
    & \ \leq \
    \Parens{ V^{1/2} + \frac{R}{3} }^2
    + \int_0^{9V^2/R^2} 4D\exp\Parens{ -\frac{t}{4V} } \dif t
    + \int_{9V^2/R^2}^\infty 4D\exp\Parens{ -\frac{3t^{1/2}}{4R} } \dif t
    \\
    & \ = \
    \Parens{ V^{1/2} + \frac{R}{3} }^2
    + 16VD\Braces{ 1 - \exp\Parens{-\frac{9V}{4R^2}} }
    + \frac{128R^2D}{9} \Braces{ \frac{9V}{4R^2} + 1 } \exp\Parens{-\frac{9V}{4R^2}}
    \\
    & \ = \
    \Parens{ V^{1/2} + \frac{R}{3} }^2
    + 16VD\Braces{ 1 + \exp\Parens{-\frac{9V}{4R^2}} }
    + \frac{128R^2D}{9} \exp\Parens{-\frac{9V}{4R^2}}
    \\
    & \ \leq \
    (2+32D)V + \Parens{ \frac{2+128D}{9} } R^2
    \,.
    \qedhere
  \end{align*}
\end{proof}

\subsection{Differences of powers of bounded operators}

\begin{lemma}
  \label{lem:power}
  Let $\A$ and $\B$ be non-negative self-adjoint operators with $\norm{\A} < 1$ and $\norm{\B} < 1$.
  For any $\g \geq 1$,
  \[
    \norm{\A^\g - \B^\g}
    \ \leq \
    2\g\norm{\A - \B}
    \,.
  \]
\end{lemma}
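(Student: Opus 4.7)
The plan is to reduce the general real-$\g$ case to the integer case, where the bound follows from an elementary telescoping identity, and to handle the non-integer range via an integral / Fr\'echet-derivative representation.

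First I would treat $\g = n$ for positive integer $n$ using
\[
  \A^n - \B^n \ = \ \sum_{k=0}^{n-1} \A^{n-1-k} (\A - \B) \B^k.
\]
Taking operator norms and using $\norm{\A}, \norm{\B} < 1$ yields $\norm{\A^n - \B^n} \leq n \norm{\A-\B}$, which is already stronger than the claimed bound $2n\norm{\A-\B}$. This step is essentially routine and costs nothing more than a triangle inequality.

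For non-integer $\g \geq 1$, I would pass to the integral representation along the straight-line path from $\B$ to $\A$,
\[
  \A^\g - \B^\g \ = \ \int_0^1 \frac{d}{dt}\bigl[ (t\A + (1-t)\B)^\g \bigr] \, dt.
\]
By the Daletskii--Krein formula, the Fr\'echet derivative of $X \mapsto X^\g$ at $Y := t\A + (1-t)\B$ in the direction $\A - \B$ acts on the spectral decomposition of $Y$ via the divided difference $(\lambda^\g - \mu^\g)/(\lambda - \mu)$. Since $\norm{Y} < 1$, its spectrum lies in $[0,1)$, and the mean-value theorem gives the pointwise bound $\g\max(\lambda,\mu)^{\g-1} \leq \g$ on this divided difference. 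A standard Schur-multiplier bound for self-adjoint operators then yields an operator-norm estimate of at most $2\g$ on the Fr\'echet derivative, which delivers the claimed inequality upon integrating in~$t$.

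The main obstacle is precisely this non-integer case: the extra factor of~$2$ over the scalar (or commutative) Lipschitz constant $\g$ reflects the non-commutativity of $\A$ and $\B$, and the cleanest route requires the double-operator-integral / Schur-multiplier machinery sketched above (or, equivalently, an appeal to an operator-theoretic inequality of the Ando or Kittaneh type). A naive decomposition $\g = n + s$ with $n$ integer and $s \in [0,1)$ together with the identity $\A^\g - \B^\g = \A^n(\A^s - \B^s) + (\A^n - \B^n)\B^s$ does \emph{not} close to the claimed linear bound, because fractional-power differences typically admit only a H\"older estimate $\norm{\A^s - \B^s} \leq \norm{\A-\B}^s$ (of the flavor encapsulated in Lemma~\ref{lem:fractional}), not a Lipschitz one. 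This is why the Fr\'echet-derivative / integral approach is the natural path here, and why the hypothesis $\g \geq 1$ (as opposed to general $\g > 0$) is essential.
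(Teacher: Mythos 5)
Your integer case coincides with the paper's argument and is fine (the telescoping identity gives the even better constant $\g$). The non-integer case, however, has a genuine gap at its central step. After invoking Daletskii--Krein, the Fr\'echet derivative of $X \mapsto X^{\g}$ at $Y$ is a double operator integral, i.e.\ a Schur multiplier whose symbol is the divided difference $(\lambda^{\g}-\mu^{\g})/(\lambda-\mu)$. Your pointwise bound $\g\max(\lambda,\mu)^{\g-1}\leq\g$ on this symbol is correct, but there is no ``standard Schur-multiplier bound'' that converts a sup-norm bound on the symbol into an operator-norm bound (with factor $2$, or any universal factor) on the multiplier: the operator norm of a Schur multiplier is not controlled by the supremum of its entries. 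This is exactly why Lipschitz functions need not be operator Lipschitz and why, e.g., triangular truncation is unbounded in operator norm; so the step that produces your factor $2\g$ is asserted rather than derived. To close your route you would have to genuinely estimate the Schur-multiplier norm of the divided difference of $t^{\g}$ (for instance by exhibiting it, via an integral representation of $t^{\g-1}$, as an average of kernels whose multiplier norms you can control), and in addition justify operator-norm Fr\'echet differentiability of $X\mapsto X^{\g}$ along the segment $t\A+(1-t)\B$; neither is supplied, and neither is routine, nor does anything in the sketch actually yield the constant $2$.

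It is also worth noting that the heavy machinery is unnecessary: the paper's proof is elementary. For $1<\g<2$ it perturbs by $t\I$, expands $\A_t^{\g}=(\I+(\A_t-\I))^{\g}$ and $\B_t^{\g}$ in the binomial series, applies the integer-case telescoping identity to each term $\norm{(\A_t-\I)^k-(\B_t-\I)^k}\leq k(1-t)^{k-1}\norm{\A-\B}$, and sums, using $k\abs{\binom{\g}{k}}=\g\abs{\binom{\g-1}{k-1}}$, to get the bound $\g\braces{2-(1-(1-t))^{\g-1}}\norm{\A-\B}\leq 2\g\norm{\A-\B}$; this is where the factor $2$ actually comes from. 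For non-integer $\g>2$ it writes $\A^{\g}=(\A^k)^{\g/k}$ with $1<\g/k<2$ and chains the two cases: $\norm{\A^{\g}-\B^{\g}}\leq(2\g/k)\norm{\A^k-\B^k}\leq 2\g\norm{\A-\B}$. So your observation that the additive split $\g=n+s$ only yields a H\"older-type estimate is correct, but your conclusion that double-operator-integral/Schur-multiplier tools are therefore required is not: a power-series reduction to the integer case, combined with the multiplicative reduction $\g=k\cdot(\g/k)$, suffices.
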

\begin{proof}
  The proof considers three possible cases for the value of $\g$:
  (i) $\g$ is an integer, (ii) $1 < \g < 2$, and (iii) $\g$ is a non-integer
  larger than two.

  \emph{Case 1: $\g$ is an integer.}
  In this case, we have the following identity:
  \[
    \A^\g - \B^\g
    \ = \
    \sum_{j=1}^\g \A^{j-1} \parens{ \A - \B } \B^{\g-j}
    \,.
  \]
  So, by the triangle inequality,
  \[
    \norm{\A^\g - \B^\g}
    \ \leq \
    \norm{\A-\B}
    \sum_{j=1}^\g \norm{\A^{j-1}} \norm{\B^{\g-j}}
    \ \leq \
    \g\norm{\A-\B}
    \,.
  \]

  \emph{Case 2: $1 < \g < 2$.}
  Pick $0 < r < 1$ such that $\norm{\A} < r$ and $\norm{\B} < r$, and fix $0 < t < (1-r)/2$.
  Define $\A_t = \A + t\I$ and $\B_t = \B + t\I$, and
  \[
    \binom{\g}{k} \ = \
    \frac{\g(\g-1)\dotsb(\g-k+1)}{k!}
    \,,
    \quad k = 1, 2, \dotsc
    \,.
  \]
  Then, using the power series $(1+s)^\g = 1 + \sum_{k=1}^\infty \binom{\g}{k} s^k$ for $-1 < s <
  1$,
  \[
    \A_t^\g - \B_t^\g
    \ = \
    \sum_{k=1}^\infty
    \binom{\g}{k}
    \braces{ (\A_t - \I)^k - (\B_t - \I)^k }
    \ = \
    \sum_{k=1}^\infty
    \binom{\g}{k}
    \sum_{j=1}^k
    \A^{j-1} \parens{ \A - \B } \B^{k-j}
    \,.
  \]
  Convergence is assured because $\norm{\A_t-\I} \leq 1-t$ and $\norm{\B_t-\I} \leq 1-t$.
  Moreover,
  \begin{align*}
    \norm{\A_t^\g - \B_t^\g}
    & \ \leq \
    \norm{\A-\B}
    \sum_{k=1}^\infty
    \Abs{\binom{\g}{k}}
    \sum_{j=1}^k \norm{\A_t-\I}^{j-1} \norm{\B_t-\I}^{k-j}
    \\
    & \ \leq \
    \norm{\A-\B}
    \sum_{k=1}^\infty
    \Abs{\binom{\g}{k}}
    k(1-t)^{k-1}
   \ = \
    \g
    \norm{\A-\B}
    \Braces{
      1 + \sum_{k=1}^\infty \Abs{\binom{\g-1}{k}} (1-t)^k
    }
    \\
    & \ = \
    \g
    \norm{\A-\B}
    \Braces{ 2 - (1-(1-t))^{\g-1} }
  \ \leq \
    2\g\norm{\A-\B}
    \,.
  \end{align*}
  Above, we have used the power series $1-(1-s)^{\g-1} = \sum_{k=1}^\infty \abs{\binom{\g-1}{k}}
  s^k$ at $s = 1-t$.
  Taking $t\to0$ yields
  \[
    \norm{\A^\g - \B^\g} \ \leq \ 2\g\norm{\A-\B}
    \,.
  \]

  \emph{Case 3: $\g$ is a non-integer larger than two.}
  We can write $\g = k + q$ for an integer $k\geq2$ and real number $0 < q < 1$.
  Applying the results from the previous two cases gives
  \[
    \norm{\A^\g - \B^\g}
    \ = \
    \norm{(\A^k)^{\g/k} - (\B^k)^{\g/k}}
    \ \leq \
    \frac{2\g}{k} \norm{\A^k - \B^k}
    \ \leq \
    2\g \norm{\A - \B}
    \,.
    \qedhere
  \]
\end{proof}

\begin{lemma}
  \label{lem:fractional}
  Pick any real numbers $r,\g \in (0,1)$.
  Let $\A$ and $\B$ be non-negative self-adjoint operators, each with spectrum contained in $\intco{r,1}$.
  Then
  \[
    \norm{\A^\g - \B^\g}
    \ \leq \
    \g r^{\g-1}\norm{\A - \B}
    \,.
  \]
\end{lemma}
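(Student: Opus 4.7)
The plan is to lift the scalar inequality $|x^\g - y^\g| \leq \g r^{\g-1}|x-y|$ for $x,y \in [r,1]$ (which follows from the mean value theorem, since the derivative $\g t^{\g-1}$ is decreasing on $[r,1]$ and peaks at $r$) to operators via an integral representation of the fractional power. The cleanest route is to use the classical formula
\[
x^\g \ = \ \frac{\sin(\pi\g)}{\pi} \int_0^\infty \frac{\l^{\g-1} x}{x+\l}\, d\l,
\qquad x > 0,\ 0 < \g < 1,
\]
which is immediate after the substitution $\l = xu$ and the evaluation $\int_0^\infty u^{\g-1}(1+u)^{-1}\, du = \pi/\sin(\pi\g)$.

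Since $\A$ and $\B$ are self-adjoint with spectra contained in $[r,1]$, the Borel functional calculus gives
\[
\A^\g - \B^\g \ = \ \frac{\sin(\pi\g)}{\pi} \int_0^\infty \l^{\g-1}\bigl[\A(\A+\l\I)^{-1} - \B(\B+\l\I)^{-1}\bigr]\, d\l.
\]
Next I would simplify the bracket using $\A(\A+\l\I)^{-1} = \I - \l(\A+\l\I)^{-1}$ and the resolvent identity, yielding $\l\,(\A+\l\I)^{-1}(\A-\B)(\B+\l\I)^{-1}$. The spectral assumption gives the resolvent bounds $\norm{(\A+\l\I)^{-1}} \leq (r+\l)^{-1}$ and similarly for $\B$, so after taking operator norms inside the integral and factoring $\norm{\A-\B}$ out, the problem reduces to bounding the scalar integral
\[
\frac{\sin(\pi\g)}{\pi}\int_0^\infty \frac{\l^{\g}}{(\l+r)^2}\, d\l.
\]

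The substitution $\l = ru$ converts this integral to $r^{\g-1}\int_0^\infty u^\g/(1+u)^2\, du$, and the remaining integral is exactly the beta function $B(1+\g,1-\g) = \g\,\G(\g)\G(1-\g) = \g\pi/\sin(\pi\g)$ by Euler's reflection formula. The factor of $\sin(\pi\g)/\pi$ in front cancels, leaving precisely $\g r^{\g-1}\norm{\A-\B}$, as required.

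There is no serious obstacle here; the only things to watch are (i) that the spectra being bounded away from $0$ and from $\infty$ makes the integral representation converge and the functional calculus well-defined for these possibly infinite-dimensional self-adjoint operators, and (ii) that the resolvent identity $(\A+\l\I)^{-1}-(\B+\l\I)^{-1} = (\A+\l\I)^{-1}(\B-\A)(\B+\l\I)^{-1}$ is applied with the correct sign. Everything else is a scalar special-function computation.
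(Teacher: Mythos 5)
Your proof is correct, but it takes a genuinely different route from the paper. The paper proves this lemma the same way it proves Lemma~\ref{lem:power}: it expands $\A^\g$ and $\B^\g$ as binomial series around the identity, writing $\A^\g = (\I - (\I-\A))^\g$ and using $\norm{\A-\I} \leq 1-r$, $\norm{\B-\I} \leq 1-r$, then telescopes the differences of integer powers term by term; the constant $\g r^{\g-1}$ emerges from summing the series $(1-s)^{\g-1} = 1 + \sum_{k\geq1} \abs{\binom{\g-1}{k}} s^k$ at $s = 1-r$. You instead use the Balakrishnan integral representation $x^\g = \frac{\sin(\pi\g)}{\pi}\int_0^\infty \frac{\l^{\g-1}x}{x+\l}\,\dif\l$, the identity $\A(\A+\l\I)^{-1} - \B(\B+\l\I)^{-1} = \l(\A+\l\I)^{-1}(\A-\B)(\B+\l\I)^{-1}$, the resolvent bounds $\norm{(\A+\l\I)^{-1}} \leq (r+\l)^{-1}$, and the beta-function evaluation $\int_0^\infty u^\g(1+u)^{-2}\,\dif u = \g\pi/\sin(\pi\g)$; these steps are all sound (the sign and ordering in your resolvent identity are fine, and the integral converges since the integrand is $O(\l^\g)$ near $0$ and $O(\l^{\g-2})$ at infinity, so passing the norm inside the Bochner integral is legitimate). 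Your route is more standard in operator theory and slightly more general --- it never uses the upper bound $1$ on the spectra, only the lower bound $r$ --- whereas the paper's series argument is elementary, avoids special functions and vector-valued integration, and deliberately reuses the machinery of Lemma~\ref{lem:power} so the two lemmas share a single proof template. Both arguments yield exactly the constant $\g r^{\g-1}$, so there is no gap.
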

\begin{proof}
  Since $\norm{\A-\I} \leq 1-r$ and $\norm{\B-\I} \leq 1-r$, the proof is similar to that of
  Lemma~\ref{lem:power}:
  \begin{align*}
    \norm{\A^\g - \B^\g}
    & \ \leq \
    \g\norm{\A-\B}
    \Braces{
      1 +
      \sum_{k=1}^\infty \Abs{\binom{\g-1}{k}} (1-r)^k
    }
    \ = \
    \g r^{\g-1}\norm{\A-\B}
    \,.
  \end{align*}
  Above, we have used the power series $(1-s)^{\g-1} = 1 + \sum_{k=1}^\infty \abs{\binom{\g-1}{k}}
  s^k$ at $s = 1-r$ (which differs from the case in Lemma~\ref{lem:power} because $0 < \g < 1$).
\end{proof}

\end{document}